%% file: main_arXiv.tex
\documentclass{article}
\usepackage{arxiv}

\usepackage[utf8]{inputenc}

\usepackage{bm}
\usepackage{amsmath,amsfonts}
\usepackage{mathrsfs}
\usepackage{subcaption}
\usepackage[colorlinks=true, pdfstartview=FitV, linkcolor=black, citecolor=black, urlcolor=black]{hyperref}
\usepackage{footnpag}			      	% make footnote symbols restart on each page

\usepackage{booktabs}
\usepackage{csvsimple}
\usepackage[acronym]{glossaries}
\makeglossaries
\usepackage{mathtools}
\usepackage{microtype}
\usepackage{physics}
\usepackage[capitalize]{cleveref}
\usepackage{siunitx}
\usepackage{rotating}
\usepackage[svgnames]{xcolor}

\hypersetup{colorlinks=true,allcolors=RoyalBlue}
\glsdisablehyper

\graphicspath{{./figs/}}

\newcommand{\da}[1]{\left[#1\right]}
\newcommand{\dda}[1]{\var#1}
\newcommand{\vdda}[1]{\var\bm{#1}}
\newcommand{\ty}[3]{\mathcal{T}^{(#1)}_{#2}\left(#3\right)}

\newcommand{\R}{\mathbb R}
\newcommand{\N}{\mathbb N}
\newcommand{\M}{\mathscr M}
\newcommand{\X}{\mathscr X}

\newtheorem{lemma}{Lemma}
\newtheorem{theorem}{Theorem}
\newenvironment{proof}{\noindent {\bf Proof:}}{\hfill $\Box$}

\newcommand\extrafootertext[1]{%
    \bgroup
    \renewcommand\thefootnote{\fnsymbol{footnote}}%
    \renewcommand\thempfootnote{\fnsymbol{mpfootnote}}%
    \footnotetext[0]{#1}%
    \egroup
}

\title{POLYNOMIAL-BASED SOLUTIONS TO TARGETING PROBLEMS FOR ONBOARD APPLICATIONS}

\author{
  Adam Evans\thanks{Postdoctoral Fellow, Te P\={u}naha \={A}tea -- Space Institute} \\
  The University of Auckland, Auckland, New Zealand.
  \And
  Alberto Foss\`{a}\thanks{Postdoctoral Fellow, Oden Institute for Computational Engineering and Sciences} \\
  The University of Texas at Austin, Austin, TX.
  \And
  Roberto Armellin\thanks{Professor, Te P\={u}naha \={A}tea -- Space Institute}\\
  The University of Auckland, Auckland, New Zealand.
  \And
  Didier Henrion\thanks{Senior Researcher, LAAS-CNRS}\\
  Universit\'{e} de Toulouse, Toulouse, France.
  \And
  Renato Zanetti\thanks{Associate Professor, Department of Aerospace Engineering and Engineering Mechanics}\\
  The University of Texas at Austin, Austin, TX.
}

\date{}

\renewcommand{\headeright}{}
\renewcommand{\undertitle}{}
\renewcommand{\shorttitle}{POLYNOMIAL-BASED SOLUTIONS TO TARGETING PROBLEMS FOR ONBOARD APPLICATIONS}

\hypersetup{
pdftitle={POLYNOMIAL-BASED SOLUTIONS TO TARGETING PROBLEMS FOR ONBOARD APPLICATIONS},
pdfsubject={q-bio.NC, q-bio.QM},
pdfauthor={Adam Evans, Alberto Fossa, Roberto Armellin, Didier Henrion, Renato Zanetti},
}

\begin{document}

\include{Journal/acronyms}

\maketitle

\begin{abstract}
This paper solves the targeting problem focusing on accuracy, computational efficiency, and reliability. The trajectory optimization problem is first recast as a \gls{pop} by leveraging differential algebra to compute high-order Taylor expansions of the nonlinear dynamics and constraints. Moment-\gls{sos} optimization is then utilized to solve this \gls{pop}. A convex formulation based on a second-order expansion of the dynamics is also proposed. For impulsive targeting, the moment-\gls{sos} and convex approaches are compared against traditional \gls{nlp} solvers and map inversion techniques. Results indicate that the moment-\gls{sos} approach provides solutions as accurate as traditional \gls{nlp}, but with the critical advantage of guaranteeing convergence to the global optimum under mild assumptions. Furthermore, the method excels at handling large maneuvers and long propagation times, conditions in which standard linear approximations rapidly degrade. To demonstrate its versatility, the methodology is extended to a continuous low-thrust \gls{sk} scenario in the Earth-Moon Circular Restricted Three-Body Problem. The algorithm's performance is then evaluated in the presence of significant state errors. The ability to directly handle non-convex constraints and recast complex, nonlinear dynamics into formulations with reliable convergence properties makes the moment-\gls{sos} approach suitable for autonomous onboard applications.
\end{abstract}
\glsresetall

\section{Introduction}

As spacecraft operations evolve towards complete onboard autonomy, the need for efficient and robust trajectory re-planning has become increasingly important. However, the deployment of fully autonomous systems is constrained by limited onboard computational resources and the requirement for guaranteed solutions to complex orbital problems. This challenge is particularly evident in trajectory optimization, which traditionally relies on the numerical solution of large-scale \gls{nlp} problems. Even in fundamental orbital regimes, the highly nonlinear nature of the dynamics complicates the targeting and \gls{sk} problem, often resulting in cost landscapes characterized by multiple local minima. These complexities are further compounded in multi-body environments, such as the \gls{cr3bp}, where significant dynamical perturbations can render traditional first-order approximations insufficient. Furthermore, the inclusion of low-thrust propulsion systems introduces additional dimensionality and sensitivity, often necessitating the consideration of long time horizons and making the resulting optimal control problems notoriously difficult to solve.

To address these challenges, various strategies have been explored in the literature, each offering a trade-off between modeling fidelity and computational tractability. Traditional target point orbit maintenance strategies often rely on differential correction or linear-based control~\cite{Howell1993, farquhar1970control,howell1995station,gomez1998station}. While computationally efficient, these methods are fundamentally restricted by their reliance on first-order sensitivities, which limits their operational envelope to small maneuvers and short propagation intervals where linear assumptions hold. To expand this region of convergence, higher-order methods utilizing \gls{da} have been proposed for nonlinear targeting via map inversion~\cite{Fu2020, Evans2024_1, Evans2024_2}. However, while map inversion effectively captures the high-order flow of the dynamics, it typically lacks the ability to directly incorporate complex path constraints or navigate landscapes characterized by multiple local minima. Alternatively, convex optimization~\cite{Boyd2004} has gained traction due to its polynomial-time complexity and global convergence properties. However, its primary drawback remains that most astrodynamics problems are intrinsically non-convex. While this is often addressed by solving a sequence of relaxed convex sub-problems---known as successive convex optimization~\cite{Mao2016}---the formal convergence guarantees are often lost, and the method may struggle when the convexification process is inaccurate or faced with complex non-convex constraints.

Consequently, there remains a need for an optimization framework that occupies the space between the flexibility of \glspl{nlp} and the efficiency of convex methods. One such class of problems that has received significant recent attention is that of \glspl{pop}~\cite{Lasserre2001,Henrion2005,Lasserre2009}. Although not yet common in astrodynamics, \glspl{pop} allow for a more versatile formulation of the trajectory optimization problem while providing several advantages over traditional \gls{nlp} solvers. Specifically, dedicated algorithms such as moment-\gls{sos} optimization can solve \glspl{pop} efficiently while guaranteeing, under mild assumptions, that the global optimum is found. This provides a mathematically rigorous path to handling the nonlinearities of complex environments and non-convex operational constraints, while maintaining the robustness required for autonomous onboard applications.

This work leverages these advanced optimization techniques to solve a variety of spacecraft targeting and \gls{sk} problems. The primary contributions of this paper are as follows. First, complex targeting and \gls{sk} problems are systematically recast as \glspl{pop} by leveraging high-order \gls{da} Taylor expansions of the nonlinear dynamics and constraints. Second, for scenarios where a second-order expansion of the dynamics provides sufficient accuracy, an efficient convex relaxation of the original nonlinear problem is proposed. Finally, it is demonstrated that moment-\gls{sos} optimization effectively solves these formulations, offering significant operational benefits over standard solvers. Specifically, the approach provides guaranteed convergence to the global optimum under certain assumptions and exhibits superior robustness when handling large maneuvers, long propagation times, and non-convex constraints. This methodology is shown to bridge the gap between high-fidelity physical modeling and reliable numerical convergence across both impulsive and continuous-thrust regimes.

The remainder of this paper is organized as follows. \Cref{s:ImpulsiveTargetingProblem} formulates the impulsive targeting problem as an \gls{nlp} problem. \Cref{s:POP} subsequently recasts the general \gls{nlp} problem as a \gls{pop}. The second-order convex relaxation is proposed in \cref{s:ConvexOpt}, where techniques to improve numerical conditioning are discussed. Two numerical examples then evaluate these methods for impulsive targeting in \cref{s:ImpulsiveTargetingProblemApplications}: a classical Keplerian case benchmarking the formulations against the original \gls{nlp} solution, and a target point approach in the Earth-Moon \gls{cr3bp} comparing moment-\gls{sos} against linear approximations and map inversion. Finally, \cref{s:LTTargetingProblem} extends the methodology to a continuous low-thrust \gls{sk} scenario in the Earth-Moon \gls{cr3bp} to demonstrate its versatility and robustness against significant state errors, otherwise causing the spacecraft to rapidly diverge from the nominal orbit. Concluding remarks and future research directions are provided in \cref{s:Conclusion}.

\section{Impulsive Targeting Problem}\label{s:ImpulsiveTargetingProblem}
Consider the impulsive targeting problem where the objective is to minimize the initial impulsive maneuver required to steer the spacecraft from a given initial state to a desired final state in a fixed time interval. This problem can be formulated as follows:
\begin{equation}
    \min_{\Delta\bm{v}} \norm{\Delta\bm{v}}_2
    \label{eq:objective_function}
\end{equation}
subject to the equality constraints:
\begin{subequations}\label{eq:nonlinear_dynamics}
    \begin{align}
        \bm{x}(t_f) &= \bm{x}(t_0) + \int_{t_0}^{t_f} \bm{f}(\bm{x}(t),t)\dd{t}, \label{eq:nonlinear_dynamics_1} \\
        \bm{x}(t_0) &= \bm{x}_0^- + \begin{bmatrix} \bm{0}_{3\times 1} \\ \Delta\bm{v} \end{bmatrix}
        \label{eq:nonlinear_dynamics_2}
    \end{align}
\end{subequations}
and inequality constraint:
\begin{equation}
    \left[\bm{x}(t_f) - \overline{\bm{x}}_f\right]^T \bm{P}^{-1} \left[\bm{x}(t_f) - \overline{\bm{x}}_f\right] \leq d^2,
    \label{eq:mahalanobis_distance}
\end{equation}
where $\bm{f}(\bm{x}(t),t)$ are the ballistic dynamics, $\bm{x}_0^-\in\mathbb{R}^{6\times 1}$ is the initial state before the maneuver, $\overline{\bm{x}}_f\in\mathbb{R}^{6\times 1}$ is the target state, $t_0$ and $t_f$ are the initial and final times, $\Delta\bm{v}\in\mathbb{R}^{3\times 1}$ is the impulsive maneuver, $\bm{P}\in\mathbb{R}^{6\times 6}$ is the target covariance matrix, and $d\in\mathbb{R}$ is the target distance. \Cref{eq:mahalanobis_distance} is an upper bound on the squared Mahalanobis distance between the final state $\bm{x}(t)$ and a probability distribution with mean $\overline{\bm{x}}_f$ and covariance matrix $\bm{P}$.

\Cref{eq:objective_function,eq:nonlinear_dynamics,eq:mahalanobis_distance} define a \gls{nlp} problem that can be readily solved with dedicated algorithms such as \gls{ipopt}~\cite{Wachter2006}. As the objective function is not differentiable in $\Delta\bm{v}=\bm{0}$, \cref{eq:objective_function} is replaced by its square to avoid potential issues during the numerical solution of the \gls{nlp} problem. The objective thus becomes:
\begin{equation}
    \min_{\Delta\bm{v}} \norm{\Delta\bm{v}}_2^2
    \label{eq:objective_function_squared}
\end{equation}
subject to the equality constraints in \cref{eq:nonlinear_dynamics} and inequality constraint in \cref{eq:mahalanobis_distance}.

% ###############################################################################################
% ###############################################################################################

\section{Polynomial optimization problem}\label{s:POP}

Solving the \gls{nlp} problem defined by \cref{eq:objective_function_squared,eq:nonlinear_dynamics,eq:mahalanobis_distance} might become computationally intensive, as the dynamics in \cref{eq:nonlinear_dynamics_1} must be numerically integrated at each solver iteration. The proposed solution is to compute a high-order Taylor expansion of $\bm{x}(t_f)$ function of $\Delta\bm{v}$, and evaluate this expression in place of \cref{eq:nonlinear_dynamics_1}. As \cref{eq:nonlinear_dynamics_2,eq:mahalanobis_distance,eq:objective_function_squared} are already polynomial functions of $\Delta\bm{v}$, the original \gls{nlp} problem becomes a \gls{pop}. The latter can be then solved efficiently using the techniques described in the next section.

\subsection{Polynomial expansion of the constraints}

Consider the spacecraft state $\bm{x}$ defined as:
\begin{equation}
    \bm{x} = \left[ \bm{r}^T \quad \bm{v}^T \right]^T = \left[x \quad y \quad z \quad v_x \quad v_y \quad v_z\right]^T \in \mathbb{R}^{6\times 1},
    \label{eq:state_vector}
\end{equation}
and the impulsive maneuver $\Delta\bm{v}$ given by:
\begin{equation}
    \Delta\bm{v} = \left[ \Delta v_x \quad \Delta v_y \quad \Delta v_z \right]^T \in \mathbb{R}^{3\times 1}.
    \label{eq:maneuver_vector}
\end{equation}
A polynomial expansion of the final state can be obtained efficiently using \gls{da} techniques~\cite{Berz1999}. The impulsive maneuver is firstly initialized as:
\begin{equation}
    \da{\Delta\bm{v}} = \bm{0}_{3\times 1} + \vdda{v},
    \label{eq:delta_v_da}
\end{equation}
where the square brackets denote Taylor polynomials, and $\vdda{v}=\left[\dda{v}_x \quad \dda{v}_y \quad \dda{v}_z\right]^T$ are the three independent \gls{da} variables that represent an infinitesimal maneuver along the $x,y$ and $z$ directions, respectively. Then, evaluating \cref{eq:nonlinear_dynamics} in the \gls{da} framework results in the following expression for the final state:
\begin{equation}
    \da{\bm{x}(t_f)} = \ty{k}{\bm{x}(t_f)}{\vdda{v}},
    \label{eq:final_state_expansion}
\end{equation}
which is a vector of $k$-th order Taylor polynomials in $\vdda{v}$. Substituting \cref{eq:final_state_expansion} into the \gls{rhs} of \cref{eq:mahalanobis_distance} finally yields to the following approximation of the squared Mahalanobis distance:
\begin{equation}
    \begin{aligned}
        \da{d^2_M(t_f)} &= \left\{\da{\bm{x}(t_f)} - \overline{\bm{x}}_f\right\}^T \bm{P}^{-1} \left\{\da{\bm{x}(t_f)} - \overline{\bm{x}}_f\right\} \\
        &= \ty{k}{d^2_M(t_f)}{\vdda{v}},
    \end{aligned}
    \label{eq:mahalanobis_distance_expansion}
\end{equation}
which is again a $k$-th order Taylor polynomial in $\vdda{v}$. As the flow of the dynamics is embedded in \cref{eq:final_state_expansion}, \cref{eq:nonlinear_dynamics,eq:mahalanobis_distance} are replaced by the single inequality constraint:
\begin{equation}
    \ty{k}{d^2_M(t_f)}{\Delta\bm{v}} \leq d^2.
    \label{eq:constraint_pop}
\end{equation}
and the original \gls{nlp} problem is recast as the \gls{pop} of minimizing \cref{eq:objective_function_squared} subject to \cref{eq:constraint_pop}.

\subsection{Quadratic expansion of the constraints}

Consider the polynomial expansion of the final state given by \cref{eq:final_state_expansion}. Choosing $k=2$ and expanding terms yields
\begin{equation}
    \da{x_i(t_f)} = \sum_{\abs{\bm{\alpha}}\leq2} c_{i,\bm{\alpha}} \vdda{v}^{\bm{\alpha}} \qquad i\in[1,6],
    \label{eq:explicit_taylor_coefficients}
\end{equation}
where $\da{x_i(t_f)}$ are the six components of the \gls{da} vector $\da{\bm{x}(t_f)}$, $c_{i,\bm{\alpha}}$ are the coefficients of the monomials function of $\vdda{v}^{\bm{\alpha}}$, and $\bm{\alpha}\coloneqq(\alpha_j)\in\N^3$ is a multi-index with $\alpha_j$ corresponding to the exponent of $\dda{v}_j$ for $j=x,y,z$. \Cref{eq:explicit_taylor_coefficients} can then be rewritten in matrix form as
\begin{equation}
    \da{\bm{x}(t_f)} = \bm{c}_0 + \bm{A}\bm{z},
    \label{eq:final_state_expansion_matrix}
\end{equation}
where $\bm{c}_0\in\mathbb{R}^{6\times 1}$ is the vector of constant coefficients, $\bm{A}\in\mathbb{R}^{6\times 9}$ is the matrix of first and second order coefficients, and $\bm{z}$ is defined as
\begin{equation}
    \begin{aligned}
        \bm{z} &= \left[z_{\bm{\alpha}}\right]_{\abs{\bm{\alpha}}\leq 2} = \left[ z_{100} \quad z_{010} \quad z_{001} \quad z_{200} \quad z_{110} \quad z_{020} \quad z_{101} \quad z_{011} \quad z_{002} \right]^T \\
        &= \left[\vdda{v}^{\bm{\alpha}}\right]_{\abs{\bm{\alpha}}\leq 2} = \left[ \dda{v}_x \quad \dda{v}_y \quad \dda{v}_z \quad \dda{v}_x^2 \quad \dda{v}_x\dda{v}_y \quad \dda{v}_y^2 \quad \dda{v}_x\dda{v}_z \quad \dda{v}_y\dda{v}_z \quad \dda{v}_z^2 \right]^T.
    \end{aligned}
    \label{eq:optimization_variables}
\end{equation}
Substituting \cref{eq:final_state_expansion_matrix} into \cref{eq:mahalanobis_distance} yields
\begin{equation}
    \left[\bm{A}\bm{z} - \left(\overline{\bm{x}}_f - \bm{c}_0\right)\right]^T \bm{P}^{-1} \left[\bm{A}\bm{z} - \left(\overline{\bm{x}}_f - \bm{c}_0\right)\right] \leq d^2,
\end{equation}
which can be rewritten as
\begin{equation}
    \bm{z}^T\bm{Q}\bm{z} + \bm{c}^T\bm{z} + r \leq 0,
    \label{eq:quadratic_constraint}
\end{equation}
where
\begin{subequations}
    \begin{align}
        \bm{Q} &= \bm{A}^T\bm{P}^{-1}\bm{A}, \\
        \bm{c} &= -2\left(\overline{\bm{x}}_f - \bm{c}_0\right)\bm{P}^{-1}\bm{A}, \\
        r &= \left(\overline{\bm{x}}_f - \bm{c}_0\right)^T\bm{P}^{-1}\left(\overline{\bm{x}}_f - \bm{c}_0\right) - d^2.
    \end{align}
    \label{eq:quadratic_constraint_components}
\end{subequations}
\Cref{eq:quadratic_constraint} is a fourth-order approximation of the constraint on the squared Mahalanobis distance function of the \gls{da} variables $\vdda{v}$. It does however not coincide with \cref{eq:mahalanobis_distance_expansion} for $k=4$, as the dynamics in \cref{eq:explicit_taylor_coefficients,eq:final_state_expansion_matrix} are truncated at second order. Moreover, additional constraints must be enforced to ensure the consistency of the polynomial expansion according to \cref{eq:optimization_variables}:
\begin{equation}
    \begin{aligned}
        z_{100}^2 - z_{200} = 0 &\qquad z_{100} z_{010} - z_{110} = 0 \\
        z_{010}^2 - z_{020} = 0 &\qquad z_{100} z_{001} - z_{101} = 0 \\
        z_{001}^2 - z_{002} = 0 &\qquad z_{010} z_{001} - z_{011} = 0.
    \end{aligned}
    \label{eq:equality_constraints_nonconvex}
\end{equation}
\Cref{eq:quadratic_constraint_components,eq:equality_constraints_nonconvex} provides the basis for the convex relaxation of the targeting problem presented later in this paper.

% ###############################################################################################
% ###############################################################################################

\section{Moment-sum-of-squares optimization}\label{s:mom-SOS}

Although \glspl{pop} can be solved using generic \gls{nlp} solvers, these algorithms treat the objective and constraint functions as black-boxes, and do not exploit their polynomial structure. Moreover, most solvers are gradient-based, meaning that the provided solution might be only locally optimal. In contrast, moment-\gls{sos} optimization~\cite{Lasserre2001,Lasserre2009} exploits the polynomial structure of the problem and builds a hierarchy of convex problems that can be solved efficiently via \gls{sdp}~\cite{Lasserre2009} techniques. Their solutions provide increasingly tighter lower bounds of the cost function until the optimal value is attained. The optimal solution to the original \gls{pop} is then recovered from the underlying relaxation.

\subsection{Moment-SOS hierarchy}

The following paragraphs recall the basic ingredients of the moment-\gls{sos} hierarchy. Readers interested in further details can consult the lecture notes~\cite{Henrion2023} or the more technical and comprehensive monograph~\cite{Lasserre2009}.

Consider the \gls{pop}
\begin{equation}\label{eq:pop}
    p^* \coloneqq \min_{\bm{\chi} \in \X} p(\bm{\chi})
\end{equation}
where $p$ is a given multivariate polynomial of the indeterminates $\bm{\chi} \in \R^n$, and
\begin{equation}
    \X \coloneqq \left\{\bm{\chi} \in \R^n : g_k(\bm{\chi}) \geq 0,\: k=1,\ldots,m \right\}
\end{equation}
is a bounded set defined by a given vector $\bm{g}\coloneqq(g_k)_{k=1,\ldots,m}$ of polynomials. Since $\X$ is compact and $p$ is continuous, a minimizer exists. Yet, the \gls{pop} in \cref{eq:pop} is a difficult global optimization problem with potentially many local and global optima.

The first step consists in reformulating the \gls{pop} as a finite-dimensional \gls{lp} problem. Let $\mathbb R[\bm{\chi}]_d$ denote the vector space of polynomials of $\bm{\chi}$ of degree at most $d$. It can be indexed by $\N^n_d\coloneqq\{\bm{\alpha} \in \N^n : \sum_{i=1}^n \alpha_i \leq d\}$.
%
%of dimension the binomial coefficient $|\N^n_d|\coloneqq\frac{(n+d)!}{n!d!}$.
%
Let $\bm{b}(\bm{\chi})\coloneqq(b_{\bm{\alpha}}(\bm{\chi}))_{\bm{\alpha}\in \N^n_d}$ denote a basis for this space, with the convention that $b_{\bm{0}}(\bm{\chi})\coloneqq 1$, so that every element $p \in \R[\bm{\chi}]_d$ can be expressed as a linear combination
\begin{equation}\label{poly}
    p(\bm{\chi}) = \sum_{\bm{\alpha}\in \N^n_d} p_{\bm{\alpha}} b_{\bm{\alpha}}(\bm{\chi}). %= \bm{p}^T \bm{b}(\bm{\chi})
\end{equation}
%
% with coefficient vector\footnote{$\bm{p}^T$ denotes the row vector transpose of column vector $\bm{p}$.} $\bm{p}\coloneqq(p_{\bm{\alpha}})_{\bm{\alpha}\in \N^n_d} \in \R^{|\N^n_d|}$.
%
Now consider a linear functional on $\R[\bm{\chi}]_d$. In basis $\bm{b}$, such a functional can be represented with a vector $\bm{y}\coloneqq(y_{\bm{\alpha}})_{\bm{\alpha} \in \N^n_d}$ as follows:
\begin{equation}\label{riesz}
 \ell_{\bm{y}}(p) \coloneqq \sum_{\bm{\alpha}\in \N^n_d} p_{\bm{\alpha}} y_{\bm{\alpha}}.
\end{equation}
Informally, $\ell_{\bm{y}}$ linearizes polynomials, compare \cref{poly} and \cref{riesz}. Note that $\ell_{\bm{y}}(1)=1$.
%
% The space of vectors $\bm{y}$ is dual\footnote{The dual to a vector space is the set of its (bounded) linear functionals.} to $\R[\bm{\chi}]_d$, and we denote it by $\R[\bm{\chi}]^{\star}_d$.
%
Given any vector $\bm{\chi} \in \X$, note that for the specific choice $\bm{y} = \bm{b}(\bm{\chi})$, the linear functional $\ell_{\bm{y}}(p) = p(\bm{\chi})$ coincides with the point evaluation at $\bm{\chi}$, and the expression $p(\bm{\chi})$ (which is nonlinear in $\bm{\chi}$) can be replaced with the expression $\ell_{\bm{y}}(p)$ (which is linear in $\bm{y}$). The constraint
\begin{equation}\label{nonconvex}
    \bm{y} \in \bm{b}(\mathscr X)\coloneqq\{\bm{b}(\bm{\chi}) : \bm{\chi} \in \mathscr X\}
\end{equation}
is however nonconvex. Since $\ell_{\bm{y}}(p)$ is linear in $\bm{y}$, \cref{nonconvex} can be replaced with the convex constraint
\begin{equation}\label{convex}
    \bm{y} \in \M(\X)\coloneqq\mathrm{conv}\:\bm{b}(\mathscr X)
\end{equation}
where $\mathrm{conv}$ denotes the convex hull\footnote{The convex hull of a set $\mathscr A$ is the smallest closed convex set containing $\mathscr A$.}. Now consider the finite-dimensional \gls{lp} problem
\begin{equation}\label{eq:lp}
    p^*_M\coloneqq\min_{\bm{y} \in \M(\X)} \ell_{\bm{y}}(p).
\end{equation}
It turns out that solving the \gls{lp} in \cref{eq:lp} is equivalent to solving the original nonconvex \gls{pop} in \cref{eq:pop}:
\begin{lemma}
    $p^*=p^*_M$.
\end{lemma}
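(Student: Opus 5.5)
We prove the two inequalities $p^*_M \le p^*$ and $p^* \le p^*_M$ separately, using only linearity of $\bm{y}\mapsto\ell_{\bm{y}}(p)$ and the compactness of $\X$.

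\emph{Upper bound $p^*_M \le p^*$.} Since $\X$ is compact and $p$ is continuous, take a minimizer $\bm{\chi}^*\in\X$, so that $p(\bm{\chi}^*)=p^*$. The vector $\bm{y}^*\coloneqq\bm{b}(\bm{\chi}^*)$ lies in $\bm{b}(\X)\subseteq\M(\X)$, hence it is feasible for \cref{eq:lp}. By the point-evaluation property, $\ell_{\bm{y}^*}(p)=p(\bm{\chi}^*)=p^*$. Therefore $p^*_M\le p^*$.

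\emph{Lower bound $p^*_M \ge p^*$.} For every $\bm{\chi}\in\X$ we have $\ell_{\bm{b}(\bm{\chi})}(p)=p(\bm{\chi})\ge p^*$. So the linear function $L(\bm{y})\coloneqq\ell_{\bm{y}}(p)$ satisfies $L\ge p^*$ on $\bm{b}(\X)$. The set $\{\bm{y}: L(\bm{y})\ge p^*\}$ is a closed half-space containing $\bm{b}(\X)$, so it also contains the closed convex hull $\M(\X)$. Equivalently, any convex combination $\sum_j\lambda_j\bm{b}(\bm{\chi}_j)$ gives $L=\sum_j\lambda_j p(\bm{\chi}_j)\ge p^*$, and the inequality persists under limits by continuity of $L$. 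Hence $\ell_{\bm{y}}(p)\ge p^*$ for all $\bm{y}\in\M(\X)$, i.e.\ $p^*_M\ge p^*$.

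The only delicate point is the closure in the definition of $\mathrm{conv}$. The half-space argument handles it without further work. Alternatively, $\bm{b}$ is continuous and $\X$ is compact, so $\bm{b}(\X)$ is compact in finite dimension, and by Carath\'eodory's theorem its convex hull is already closed. This also shows that the minimum in \cref{eq:lp} is attained, for instance at $\bm{y}^*$.

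Note that the statement $p^*_M=p^*$ does not need the constraint $\bm{y}_{\bm{0}}=1$ separately: $b_{\bm{0}}\equiv 1$ on $\bm{b}(\X)$, so every element of $\M(\X)$ automatically has $y_{\bm{0}}=1$. This is consistent with the earlier remark that $\ell_{\bm{y}}(1)=1$.
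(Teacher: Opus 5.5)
Your proof is correct and uses the same two-inequality argument as the paper: the point-evaluation vector $\bm{b}(\bm{\chi}^*)$ is feasible for \cref{eq:lp} with value $p^*$, and $\ell_{\bm{y}}(p)\ge p^*$ holds on all of $\M(\X)$. Your half-space argument makes rigorous the step the paper writes as $\ell_{\bm{y}}(p)\ge\ell_{\bm{y}}(p^*)$, namely that the bound valid on $\bm{b}(\X)$ survives convex combinations and the closure.
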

\begin{proof}
    On one hand, for any global minimizer $\bm{\chi}^*$ of \cref{eq:pop}, notice that $\bm{y}^*\coloneqq\bm{b}(\bm{\chi}^*) \in \M(\X)$ is admissible for \cref{eq:lp}, with value $\ell_{\bm{y}^*}(p)=p(\bm{\chi}^*)=p^* \geq p^*_M$. On the other hand, it holds that $p(\bm{\chi}) \geq p^*$ for all admissible $\bm{\chi} \in \X$, and hence $\ell_{\bm{y}}(p) \geq \ell_{\bm{y}}(p^*)=p^*$ for any admissible $\bm{y} \in \M(\X)$. In particular this is true for a minimizer, implying $p^*_M \geq p^*$.
\end{proof}

The difficulty in solving \cref{eq:lp} is however concentrated into the constraint set $\M(\X)$. Despite being finite-dimensional and convex, this set is difficult to manipulate. Just determining whether it contains a given vector can be challenging. The main idea behind the moment-\gls{sos} hierarchy consists of approximating $\M(\X)$ with a family of increasingly tight semidefinite representable\footnote{A set is semidefinite representable if it is the linear projection of a linear section of a finite-dimensional cone of positive semidefinite matrices.} convex sets. Optimization of linear functions on semidefinite representable sets is the subject of \gls{sdp}, and it can be done efficiently with interior-point algorithms~\cite{Ben-Tal2001}.

A hierarchy of semidefinite representable outer approximations of $\M(\X)$ can be constructed as follows. Let $g_0\coloneqq 1$, let $r_{\bm{g}}$ be the smallest integer larger than $\max_{k} \frac{\text{deg}\,g_k}{2}$, let $r \geq r_{\bm{g}}$ be any integer and, for each $k$, let $r_k$ be the smallest integer larger than $r-\frac{\text{deg}\,g_k}{2}$. Then define
\begin{equation}
    \begin{aligned}
        \M(\X)_r \coloneqq & \left\{(y_{\bm{\alpha}})_{\bm{\alpha} \in \N^n_d} : \ell_{\bm{y}}(1)=1,\:\ell_{\bm{y}}(q^2 g_k) \geq 0, \:\forall q \in \R[\bm{\chi}]_{r_k}, \: k=0,1,\ldots,m\right\} \\
        =& \left\{(y_{\bm{\alpha}})_{\bm{\alpha} \in \N^n_d} : \ell_{\bm{y}}(1)=1,\:\vb{M}_{r_k}(g_k \bm{y}) \succeq 0,\: k=0,1,\ldots,m\right\}
    \end{aligned}
\end{equation}
where the matrix $\vb{M}_{r_k}(g_k \bm{y})$ represents the quadratic form $q \mapsto \ell_{\bm{y}}(g_k q^2)$ in basis $\bm{b}$, i.e.
\begin{equation}
    \vb{M}_{r_k}(g_k \bm{y})\coloneqq \ell_{\bm{y}}(g_k b_{\bm{\alpha}_1} b_{\bm{\alpha}_2})_{\bm{\alpha}_1,\bm{\alpha}_2 \in \N^n_{r_k}}.
\end{equation}
The notation reflects the fact that the matrix depends linearly on $g_k$ (for given $\bm{y}$) and also linearly on $\bm{y}$ (for given $g_k$). In particular, the positive semidefiniteness constraint $\vb{M}_{r_k}(g_k \bm{y}) \succeq 0$ is a \gls{lmi} in $\bm{y}$, which implies that $\M(\X)_r$ is semidefinite representable. When $k=0$, the matrix
\begin{equation}\label{mommat}
    \vb{M}_r(\bm{y})\coloneqq \ell_{\bm{y}}(\bm{b}\bm{b}^T)
\end{equation}
is called the moment matrix, where the linear functional acts entrywise on the rank-one matrix $\bm{b}\bm{b}^T$ with $T$ denoting the transpose. Given $\bm{\alpha} \in \N^n_d$, the scalar $y_{\bm{\alpha}}$ is called the pseudo-moment of degree $\bm{\alpha}$.

The following result, a consequence of the so-called Putinar Positivstellensatz, states that the convex semidefinite representable sets defined above are nested outer approximations converging (up to taking the closure) to $\M(\X)$:
\begin{lemma}
    $\M(\X)_r \supset \M(\X)_{r+1} \supset \cdots \supset \overline{\M(\X) _{\infty}} = \M(\X).$
\end{lemma}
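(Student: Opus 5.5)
Three things need to be shown: the sets $\M(\X)_r$ are nested, each contains $\M(\X)$, and their intersection (taken in the appropriate finite-dimensional coordinates) is $\M(\X)$ up to closure. Since $\bm{y}$ is indexed by $\N^n_d$ while the localizing matrices at level $r$ involve moments up to degree $2r$, we read all sets as projections onto the coordinates $(y_{\bm{\alpha}})_{\bm{\alpha}\in\N^n_d}$ of the moments of degree at most $d$.

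\emph{Monotonicity.} Take a $\bm{y}$ (of degree $2r+2$) satisfying the level-$(r+1)$ constraints. For each $k$, the truncation $r_k\le (r+1)_k$ makes $\vb{M}_{r_k}(g_k\bm{y})$ a principal submatrix of $\vb{M}_{(r+1)_k}(g_k\bm{y})$. A principal submatrix of a positive semidefinite matrix is positive semidefinite. Hence the truncation of $\bm{y}$ lies in $\M(\X)_r$, and after projecting we get $\M(\X)_{r+1}\subset\M(\X)_r$.

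\emph{Outer approximation.} For $\bm{\chi}\in\X$, set $\bm{y}=\bm{b}(\bm{\chi})$, extended to all degrees. Then $\ell_{\bm{y}}(1)=1$ and $\ell_{\bm{y}}(q^2g_k)=q(\bm{\chi})^2g_k(\bm{\chi})\ge 0$, so $\bm{b}(\bm{\chi})\in\M(\X)_r$ for every $r$. The set $\M(\X)_r$ is convex because it is defined by LMIs, and it is closed. It therefore contains the closed convex hull, which gives $\M(\X)\subset\M(\X)_r$ for all $r$, and so $\M(\X)\subset\bigcap_r\M(\X)_r$.

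\emph{Reverse inclusion.} This is the main obstacle, and it is where Putinar's Positivstellensatz enters. We need the standard Archimedean assumption, i.e.\ that some $g_k$ is (or is added as) $R^2-\|\bm{\chi}\|^2$; boundedness of $\X$ makes such a redundant constraint harmless. Suppose $\bm{y}_0$ lies in the closure of the limit set but not in $\M(\X)$. Since $\M(\X)$ is compact and convex, there is a separating linear functional: a polynomial $p$ of degree at most $d$ and an $\varepsilon>0$ with $\ell_{\bm{y}_0}(p)<\min_{\bm{y}\in\M(\X)}\ell_{\bm{y}}(p)-2\varepsilon$. By the Lemma above ($p^*=p^*_M$), the right-hand minimum equals $p^*$. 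So $p-p^*+\varepsilon$ is strictly positive on $\X$, and Putinar gives a representation
\[
p-p^*+\varepsilon=\sum_{k=0}^m\sigma_k g_k,
\]
with $\sigma_k$ sums of squares, valid at some finite degree $2r$. For any $\bm{y}\in\M(\X)_r$, applying $\ell_{\bm{y}}$ termwise and using $\ell_{\bm{y}}(q^2g_k)\ge0$ gives $\ell_{\bm{y}}(p)\ge p^*-\varepsilon$. This bound extends to $\bm{y}_0$, which lies in the limit set (a limit of elements of sets nested in $\M(\X)_r$), and that contradicts the strict separation. Hence $\overline{\M(\X)_\infty}\subset\M(\X)$, and combined with the outer approximation this gives equality.

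The delicate points are the closure and projection bookkeeping, since pseudo-moments of degree above $d$ are eliminated, and stating the Archimedean hypothesis needed for Putinar's theorem.
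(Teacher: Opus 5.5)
Your argument is correct and is the standard derivation from Putinar's Positivstellensatz; the paper gives no proof beyond naming that theorem. You establish monotonicity via principal submatrices, the inclusion $\bm{b}(\X)\subset\M(\X)_r$, and the reverse inclusion by separation plus a Putinar certificate. You are also right that an Archimedean hypothesis (e.g.\ an explicit ball constraint) is needed for Putinar's theorem and is not implied by the paper's mere boundedness of $\X$. The one loose point is your unjustified claim that $\M(\X)_r$ is closed, but you do not need it: $\mathrm{conv}\,\bm{b}(\X)$ is already compact, so convexity of $\M(\X)_r$ suffices.
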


Now consider the following hierarchy of \gls{sdp} problems called moment relaxations, indexed by the relaxation order $r$:
\begin{equation}\label{eq:sdp}
    p^*_r\coloneqq\min_{\bm{y} \in \M(\X)_r}\ell_{\bm{y}}(p).
\end{equation}
This hierarchy generates a monotonically non-decreasing converging sequence of lower bounds~\cite{Lasserre2001} on the \gls{pop} in \cref{eq:pop}:
\begin{theorem}\label{momsos}
    $p^*_r \leq p^*_{r+1} \leq \cdots \leq p^*_{\infty} = p^*$.
\end{theorem}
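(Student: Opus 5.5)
The plan is to deduce the theorem directly from the two preceding lemmas, viewing each $p^*_r$ as the minimum of the same linear functional $\ell_{\bm{y}}(p)$ over a decreasing family of feasible sets. There are three steps: monotonicity, the lower-bound property, and convergence.

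\textbf{Monotonicity.} By the second lemma, $\M(\X)_{r+1} \subset \M(\X)_r$. Minimizing $\ell_{\bm{y}}(p)$ over a smaller set can only increase the infimum, so $p^*_r \leq p^*_{r+1}$ for all $r \geq r_{\bm{g}}$.

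\textbf{Lower bounds.} Again by the second lemma, $\M(\X) \subset \M(\X)_r$ for every $r$. This inclusion can also be checked directly: for $\bm{y} = \bm{b}(\bm{\chi})$ with $\bm{\chi} \in \X$, we have $\ell_{\bm{y}}(q^2 g_k) = q(\bm{\chi})^2 g_k(\bm{\chi}) \geq 0$, and this nonnegativity passes to convex combinations and closures. Hence $p^*_r \leq p^*_M$, and the first lemma gives $p^*_M = p^*$. The sequence $(p^*_r)$ is therefore non-decreasing and bounded above by $p^*$, so its limit $p^*_\infty$ exists and satisfies $p^*_\infty \leq p^*$.

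\textbf{Convergence.} It remains to show $p^*_\infty \geq p^*$, and this is the main obstacle: nested convex sets converging to $\M(\X)$ do not automatically yield convergence of the optimal values. The truncation degree $d$ in $\bm{y}=(y_{\bm{\alpha}})_{\bm{\alpha}\in\N^n_d}$ must grow with $r$, so the sets live in different spaces. I will therefore read the second lemma as a statement about projections onto a fixed finite set of coordinates containing $\deg p$. I see two routes.

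The first route is dual. Fix $\varepsilon > 0$. The polynomial $p - p^* + \varepsilon$ is strictly positive on the compact set $\X$. Under the Archimedean assumption implicit in the second lemma, Putinar's Positivstellensatz gives
\begin{equation*}
p - p^* + \varepsilon = \sum_{k=0}^m \sigma_k g_k
\end{equation*}
with SOS multipliers $\sigma_k$ whose degrees are bounded by some $2r$. For any $\bm{y} \in \M(\X)_r$, each term satisfies $\ell_{\bm{y}}(\sigma_k g_k) \geq 0$, because $\sigma_k$ is a sum of squares $q^2$ with $q \in \R[\bm{\chi}]_{r_k}$. Applying $\ell_{\bm{y}}$ and using $\ell_{\bm{y}}(1) = 1$ gives $\ell_{\bm{y}}(p) \geq p^* - \varepsilon$. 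Hence $p^*_r \geq p^* - \varepsilon$, and letting $\varepsilon \to 0$ yields $p^*_\infty \geq p^*$.

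The second route is primal. I would pick near-minimizers $\bm{y}^{(r)}$ of the relaxations. Compactness comes from $\X$ being bounded: adding a ball constraint $R^2 - \|\bm{\chi}\|^2 \geq 0$ bounds all pseudo-moments. Passing to a limit point lands in $\overline{\M(\X)_\infty} = \M(\X)$, and the first lemma then gives the value $\geq p^*$.

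I would present the Putinar route as the main argument, since it is exactly the ``consequence of Putinar'' already invoked for the second lemma.
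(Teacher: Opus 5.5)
Your proposal is correct and matches the argument the paper relies on. The paper gives no proof of its own: it cites Lasserre~\cite{Lasserre2001}, whose proof is your dual route (Putinar's Positivstellensatz applied to $p-p^*+\varepsilon$ under an Archimedean assumption, together with weak duality), and your monotonicity and lower-bound steps follow from the paper's two lemmas as you describe. You also correctly patch two imprecisions in the paper's setup: the truncation degree of $\bm{y}$ has to grow with $r$, and boundedness of $\X$ alone does not give Putinar's representation, so an Archimedean description is needed (for instance by adding a ball constraint).
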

Let $\bm{y}^*$ denote the vector of pseudo-moments obtained by solving the moment relaxation of \cref{eq:sdp} for some given $r$. The objective is to determine whether $p^*_r=p^*$. The first candidate for global optimality is the vector of first degree pseudo-moments:
\begin{lemma}\label{firstdegree}
    Let $\bm{\chi}^{\star} \coloneqq({y}^{\star}_{\bm{\alpha}})_{|\bm{\alpha}|=1}$. If $\bm{\chi}^{\star} \in {\mathscr X}$ and $p(\bm{\chi}^{\star})=p^{\star}_r$ then $p^{\star}_r=p^{\star}$.
\end{lemma}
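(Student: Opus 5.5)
The argument is a sandwich between the lower bound from \cref{momsos} and the upper bound supplied by feasibility of $\bm{\chi}^\star$. By \cref{momsos}, $p^\star_r \leq p^\star$, since each moment relaxation optimizes over the outer approximation $\M(\X)_r \supseteq \M(\X)$ and therefore yields a lower bound on the value of the \gls{pop} in \cref{eq:pop}.

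For the reverse inequality, the hypothesis $\bm{\chi}^\star \in \X$ makes $\bm{\chi}^\star$ admissible for \cref{eq:pop}. By the definition of $p^\star$ as the minimum over $\X$, this gives $p^\star \leq p(\bm{\chi}^\star)$. The second hypothesis, $p(\bm{\chi}^\star)=p^\star_r$, then yields $p^\star \leq p^\star_r$.

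Combining the two inequalities gives $p^\star_r \leq p^\star \leq p^\star_r$, hence $p^\star_r = p^\star$. As a byproduct, $\bm{\chi}^\star$ attains the optimal value while being feasible, so it is a global minimizer of \cref{eq:pop}. This is the practical content of the lemma: it is an a posteriori certificate that can be checked by evaluating $p$ and the constraints $g_k$ at the extracted first-degree pseudo-moments.

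There is no real obstacle here. The only point requiring care is that the lower bound $p^\star_r \leq p^\star$ holds for every finite $r$, not just in the limit. This follows from the nesting $\M(\X) \subseteq \M(\X)_r$ stated in the preceding lemma, combined with the equality $p^\star = p^\star_M$ of the first lemma: minimizing the linear functional $\ell_{\bm{y}}(p)$ over the larger set $\M(\X)_r$ can only give a value at most $p^\star_M = p^\star$. I would make this explicit rather than rely solely on the chain in \cref{momsos}.
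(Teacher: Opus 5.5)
Your sandwich argument is correct and matches the paper's proof: feasibility of $\bm{\chi}^\star$ gives the upper bound $p^\star \le p(\bm{\chi}^\star) = p^\star_r$, and the relaxation gives the lower bound $p^\star_r \le p^\star$. Making the lower bound explicit is fine, but it does not need the nesting lemma: for a global minimizer $\bm{\chi}^*$, the vector $\bm{y}=\bm{b}(\bm{\chi}^*)$ is admissible for the relaxation because $\ell_{\bm{y}}(q^2 g_k)=q(\bm{\chi}^*)^2 g_k(\bm{\chi}^*)\ge 0$, which gives $p^\star_r\le p(\bm{\chi}^*)=p^\star$ directly.
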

\begin{proof}
    Every admissible vector for the \gls{pop} yields an upper bound on the value of the \gls{pop}. If this upper bound is a lower bound, it means that it is optimal.
\end{proof}

\noindent
Another useful property to check is whether the moment matrix has rank one:
\begin{lemma}\label{rankone}
    If $\rank{\vb{M}_r(\bm{y}^{\star})} = 1$ then $p^{\star}_r=p^{\star}$.
\end{lemma}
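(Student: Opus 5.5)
The plan is to reduce this lemma to \cref{firstdegree}: we show that a rank-one moment matrix forces $\bm{y}^\star$ to be the point evaluation $\bm{b}(\bm{\chi}^\star)$ at $\bm{\chi}^\star\coloneqq(y^\star_{\bm{\alpha}})_{|\bm{\alpha}|=1}$, and that this point is admissible.

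\textbf{Step 1 (factorization).} Since $\vb{M}_r(\bm{y}^\star)\succeq 0$ has rank one, write $\vb{M}_r(\bm{y}^\star)=\bm{v}\bm{v}^T$. The entry indexed by $(\bm{0},\bm{0})$ is $\ell_{\bm{y}^\star}(1)=1$, so $v_{\bm{0}}^2=1$, and we may take $v_{\bm{0}}=1$. The row indexed by $\bm{0}$ then gives $v_{\bm{\alpha}}=\ell_{\bm{y}^\star}(b_{\bm{\alpha}})=y^\star_{\bm{\alpha}}$. It is convenient to fix the monomial basis here. In that basis $\ell_{\bm{y}^\star}(b_{\bm{\alpha}}b_{\bm{\beta}})=y^\star_{\bm{\alpha}+\bm{\beta}}$, so $y^\star_{\bm{\alpha}+\bm{\beta}}=y^\star_{\bm{\alpha}}y^\star_{\bm{\beta}}$ for all $|\bm{\alpha}|,|\bm{\beta}|\le r$. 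Inducting on degree, with $\bm{e}_i$ denoting the degree-one multi-indices, we get $y^\star_{\bm{\alpha}}=(\bm{\chi}^\star)^{\bm{\alpha}}$ for every $|\bm{\alpha}|\le 2r$. Hence $\bm{y}^\star=\bm{b}(\bm{\chi}^\star)$ up to degree $2r$. This covers all moments entering $\ell_{\bm{y}^\star}(p)$ and the localizing matrices, since $\deg p\le 2r$ is assumed for the relaxation to be well posed.

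\textbf{Step 2 (admissibility).} For each $k$, the localizing constraint gives $\vb{M}_{r_k}(g_k\bm{y}^\star)\succeq 0$. Its $(\bm{0},\bm{0})$ entry is $\ell_{\bm{y}^\star}(g_k)\ge 0$. This entry involves moments only up to $\deg g_k\le 2r$, so by Step 1 it equals $g_k(\bm{\chi}^\star)$. Thus $\bm{\chi}^\star\in\X$.

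\textbf{Step 3 (conclusion).} Step 1 gives $p^\star_r=\ell_{\bm{y}^\star}(p)=p(\bm{\chi}^\star)$. Together with Step 2, \cref{firstdegree} yields $p^\star_r=p^\star$. Equivalently, $p^\star\le p(\bm{\chi}^\star)=p^\star_r\le p^\star$, where the last inequality is \cref{momsos}.

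The main obstacle is Step 1, specifically the degree bookkeeping. The moment matrix only couples moments up to degree $2r$, and $r_k$ has to be chosen so that $\deg(g_k)$ and $\deg p$ fall within that range. If the paper intends a general basis $\bm{b}$, I would first change to the monomial basis. This is an invertible linear change of variables, so it preserves the rank and positive semidefiniteness of the moment matrix.
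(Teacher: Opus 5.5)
Your proposal is correct and follows the same route as the paper. Rank one forces $\bm{y}^\star$ to be the point evaluation at $\bm{\chi}^\star$, the localizing constraints then give $\bm{\chi}^\star\in\X$, and $p^\star\le p(\bm{\chi}^\star)=p^\star_r\le p^\star$ closes the argument. You differ only in detail: you justify the factorization $\vb{M}_r(\bm{y}^\star)=\bm{b}(\bm{\chi}^\star)\bm{b}(\bm{\chi}^\star)^T$, which the paper asserts directly from the definition, via $y^\star_{\bm{\alpha}+\bm{\beta}}=y^\star_{\bm{\alpha}}y^\star_{\bm{\beta}}$ and induction, and you use only the $(\bm{0},\bm{0})$ entry of each localizing matrix rather than the whole matrix.
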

\begin{proof}
    From the definition of the moment matrix in \cref{mommat}, if $\rank{\vb{M}_r(\bm{y}^{\star})}=1$ for some $\bm{y}^{\star}$, then $\vb{M}_r(\bm{y}^{\star})=\bm{b}(\bm{\chi}^{\star})\bm{b}(\bm{\chi}^{\star})^T=\bm{y}^{\star}(\bm{y}^{\star})^T.$ Since $\bm{y}^{\star}=\bm{b}(\bm{\chi}^{\star})$ is admissible for the moment relaxation in \cref{eq:sdp}, it follows that for all $k=1,\ldots,m$, $\vb{M}_{r_k}(g_k\bm{y}^{\star})=g_k(\bm{\chi}^{\star})\bm{b}(\bm{\chi}^{\star})\bm{b}(\bm{\chi}^{\star})^T \succeq 0$ and hence $g_k(\bm{\chi}^{\star}) \geq 0$, which means that $\bm{\chi}^{\star} \in {\mathscr X}$ is admissible for the \gls{pop} in \cref{eq:pop}. Since the value of the moment relaxation $p^{\star}_r=p(\bm{\chi}^{\star})$ is a lower bound on the value of the \gls{pop}, i.e. $p^\star$, and $\bm{\chi}^{\star}$ is admissible for the \gls{pop}, it follows that $\bm{\chi}^{\star}$ is optimal and therefore $p^{\star}_r=p^{\star}$.
\end{proof}

\noindent
A more general result called flat extension states:
\begin{theorem}\label{flat}
    For some $r^* \geq \max(r_{\bm{g}},\frac{d}{2})$, if vector $\bm{y}^*$ is such that
    \begin{equation*}
        \rank{\vb{M}_{r^*-r_{\bm{g}}}(\bm{y}^*)} = \rank{\vb{M}_{r^*}(\bm{y}^*)}
    \end{equation*}
    then $p^*_{r^*}=p^*$.
\end{theorem}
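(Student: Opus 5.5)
The plan is to deduce \cref{flat} from the classical flat extension theorem of Curto and Fialkow, as used by Henrion and Lasserre. Let $\bm{y}^*$ be an optimal pseudo-moment vector for the moment relaxation \cref{eq:sdp} at order $r^*$, and write $s\coloneqq r^*-r_{\bm{g}}$ and $\rho\coloneqq\rank{\vb{M}_{r^*}(\bm{y}^*)}$.

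\textbf{Step 1: flat truncation has a representing measure.} The hypothesis says that $\vb{M}_{r^*}(\bm{y}^*)$ is a flat extension of its principal submatrix $\vb{M}_{s}(\bm{y}^*)$. Since $r_{\bm{g}}\geq 1$, the rank condition propagates one degree at a time. Every column indexed by a monomial of degree larger than $s$ is a linear combination of lower-degree columns. Using the Hankel structure $\ell_{\bm{y}}(b_{\bm{\alpha}}b_{\bm{\beta}})$, one can extend $\bm{y}^*$ to an infinite moment sequence whose moment matrices all have rank $\rho$. The Curto--Fialkow theorem then gives an atomic measure $\mu=\sum_{j=1}^{\rho}\lambda_j\delta_{\bm{\chi}_j}$ with $\lambda_j>0$, $\sum_j\lambda_j=\ell_{\bm{y}^*}(1)=1$, and $y^*_{\bm{\alpha}}=\int b_{\bm{\alpha}}\,d\mu$ for all $\bm{\alpha}\in\N^n_{2r^*}$. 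The atoms $\bm{\chi}_j$ are the common real zeros of the kernel polynomials of $\vb{M}_{s}(\bm{y}^*)$; they can be extracted by the multiplication-matrix eigenvalue procedure. I would quote this step rather than reprove it.

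\textbf{Step 2: the atoms lie in $\X$.} Fix $k$ and an atom $\bm{\chi}_j$. Because the points are distinct and $\rho=\rank{\vb{M}_{s}}$, the evaluation vectors of monomials of degree at most $s$ at the atoms are linearly independent. So there is an interpolation polynomial $q_j$ of degree at most $s\leq r_k$ with $q_j(\bm{\chi}_i)=\delta_{ij}$; the bound $s\leq r_k$ holds because $r_{\bm{g}}\geq \deg g_k/2$. The localizing constraint then gives
\[
0\leq \ell_{\bm{y}^*}(g_k q_j^2)=\sum_i\lambda_i g_k(\bm{\chi}_i)q_j(\bm{\chi}_i)^2=\lambda_j g_k(\bm{\chi}_j).
\]
The middle equality uses that $\deg(g_kq_j^2)\leq 2r^*$, so the moments coincide with those of $\mu$. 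Since $\lambda_j>0$, we get $g_k(\bm{\chi}_j)\geq 0$, and therefore every atom is in $\X$.

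\textbf{Step 3: conclude.} Because $\deg p\leq d\leq 2r^*$,
\[
p^*_{r^*}=\ell_{\bm{y}^*}(p)=\sum_j\lambda_j p(\bm{\chi}_j)\geq \sum_j\lambda_j p^*=p^*.
\]
Combined with $p^*_{r^*}\leq p^*$ from \cref{momsos}, this gives equality. It also shows that each atom is a global minimizer, which extends the argument of \cref{rankone} to rank $\rho>1$.

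The main obstacle is Step 1, the existence of the atomic representing measure from the flatness condition. This is the genuinely nontrivial Curto--Fialkow result, and I would cite it rather than reprove it. A secondary point needing care is the index bookkeeping: the moments are indexed up to degree $2r^*$ while the paper indexes $\bm{y}$ by $\N^n_d$, and the interpolants $q_j$ must fit within each localizing order $r_k$.
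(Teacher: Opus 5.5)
The paper does not prove this theorem. It quotes the flat extension result from the moment literature and, in the paragraph that follows, simply asserts the consequence: a decomposition $\vb{M}_{r}(\bm{y}^\star)=\sum_{k}w_k\bm{b}(\bm{\chi}^\star_k)\bm{b}(\bm{\chi}^\star_k)^T$ with every $\bm{\chi}^\star_k\in\X$ globally optimal. Your proposal is the standard argument behind that citation, and it is correct.

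Citing Curto--Fialkow for the atomic measure is legitimate. Its two hypotheses both hold. The first is $\vb{M}_{r^*}(\bm{y}^*)\succeq 0$, which is the $k=0$ constraint of the relaxation; state it explicitly. The second is $\operatorname{rank}\vb{M}_{r^*-1}(\bm{y}^*)=\operatorname{rank}\vb{M}_{r^*}(\bm{y}^*)$. It follows from your hypothesis because ranks are monotone along nested principal submatrices, and because $r_{\bm g}\geq 1$: since $\X$ is bounded and nonempty, some $g_k$ is nonconstant.

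Your Steps 2 and 3 add something the paper leaves implicit: they show why the shift by $r_{\bm g}$ is needed. Flatness at level $r^*-r_{\bm g}$ provides Lagrange interpolants at the atoms of degree at most $r^*-r_{\bm g}\leq r_k$. These fit inside every localizing matrix, so each localizing constraint certifies $g_k(\bm\chi_j)\geq 0$. Flatness only at level $r^*-1$ would still give a representing measure, but the interpolants would have degree $r^*-1$, and these need not fit in the localizing matrices when $r_{\bm g}>1$. You also derive, rather than assume, that every atom is a global minimizer.

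One side remark in your Step 1 is wrong, though you never use it. The atoms are the common zeros of the polynomials in $\ker\vb{M}_{r^*}(\bm{y}^*)$, not of those in $\ker\vb{M}_{r^*-r_{\bm g}}(\bm{y}^*)$. In the paper's own impulsive example $r^*=r_{\bm g}=2$, so $\vb{M}_{0}(\bm{y}^*)=[1]$ has trivial kernel. Its ``common zeros'' are then all of $\R^3$, not the single atom. The multiplication-matrix extraction must therefore be run on the flat matrix $\vb{M}_{r^*}(\bm{y}^*)$.
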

Note that \cref{rankone} is a particular case of \cref{flat}, since if $\rank{\vb{M}_{r^*}(\bm{y}^*)}=1$ for some $r^*$ then $\rank{\vb{M}_r(\bm{y}^*)}=1$ for all $r$. Similarly, \cref{firstdegree} is a particular case of \cref{flat}, since in this case the vector $\bm{y}^*=((\bm{\chi}^*)^{\bm{\alpha}})_{\bm{\alpha}}$ is such that $\rank{\vb{M}_r(\bm{y}^*)}=1$ for all $r$.

If the rank condition of \cref{flat} is satisfied, then there are $s\coloneqq\rank{\vb{M}_{r^*}(\bm{y}^{\star})}$ weights $w_k \geq 0$, $\sum_{k=1}^s w_k = 1$ and points $\bm{\chi}^{\star}_k \in {\mathscr X}$ such that $\vb{M}_r(\bm{y}^{\star})= \sum_{k=1}^s w_k \bm{b}(\bm{\chi}^{\star}_k) \bm{b}(\bm{\chi}^{\star}_k)^T$ and $p^{\star}=p(\bm{\chi}^{\star}_k)$ for all $k=1,\ldots,s$. Numerical linear algebra algorithms can then be applied on $\vb{M}_r(\bm{y}^{\star})$ to extract the points $\bm{\chi}^{\star}_k$, $k=1,\ldots,s$, which are all globally optimal for the \gls{pop} in \cref{eq:pop}.

In practice, for increasing values of the relaxation order $r$, the moment relaxations in \cref{eq:sdp} are constructed with the help of modeling software such as GloptiPoly~\cite{Henrion2008} or \texttt{MomentOpt.jl}\footnote{\url{https://github.com/lanl-ansi/MomentOpt.jl}}. These relaxations are solved numerically with an \gls{sdp} solver such as Mosek~\cite{Mosek2025} or \texttt{Hypatia.jl}~\cite{Coey2022}. The numerical solution is then post-processed to detect global optimality using the conditions of \cref{firstdegree,rankone,flat}, and to extract the global minimizers.

\subsection{Second-degree relaxation of the targeting problem}

Consider the \gls{pop} given by \cref{eq:objective_function_squared,eq:constraint_pop}. Setting the expansion order $k=4$ and using the notation of the previous paragraphs, the problem becomes:
\begin{equation}
    \min_{\bm{\chi}} p(\bm{\chi}) = \chi_1^2 + \chi_2^2 + \chi_3^2
    \label{eq:objective_relaxation}
\end{equation}
subject to
\begin{equation}
    g_1(\bm{\chi}) = d^2 - \ty{4}{d^2_M(t_f)}{\bm{\chi}} \geq 0,
    \label{eq:constraint_relaxation}
\end{equation}
where $\bm{\chi}\coloneqq\Delta\bm{v}$. \Cref{eq:objective_relaxation} is a second-order polynomial in $\bm{\chi}$, while \cref{eq:constraint_relaxation} is a fourth-order polynomial in the same variables. Therefore, $r_{\bm{g}}=2$ and the first relaxation has degree $r=2$. The moment vector in the monomial basis reads:
\begin{equation}
        \bm{y} = \left[y_{\bm{\alpha}}\right] = \left[\bm{\chi}^{\bm{\alpha}}\right], \qquad \bm{\alpha}\in\N_4^3
\end{equation}
and contains the $\binom{3+4}{3}=35$ moments from $y_{000}=1$ to $y_{004}=\chi_3^4$. The \gls{pop} given by \cref{eq:objective_function_squared,eq:constraint_pop} is thus equivalent to the following \gls{mop}:
\begin{equation}
    \min_{\bm{y}} p_M(\bm{y}) = y_{200} + y_{020} + y_{002}
    \label{eq:objective_mop}
\end{equation}
subject to
\begin{subequations}\label{eq:constraints_mop}
    \begin{align}
        \vb{M}_2(\bm{y}) &= 
        \begin{bmatrix}
            y_{000} & \ldots & y_{002} \\
            \vdots  & \ddots & \vdots  \\
            y_{002} & \ldots & y_{004} \\
        \end{bmatrix} \succeq 0 \label{eq:psd_constraint} \\
        \vb{M}_0(g_1\bm{y}) &= \bm{c}_{g_1}^T\bm{y}\\
        y_{000} &= 1,
    \end{align}
\end{subequations}
where $\vb{M}_2(\bm{y})$ is the moment matrix, which is a $10\times 10$ symmetric matrix constrained to be \gls{psd}, and $\bm{c}_{g_1}$ are the coefficients of \cref{eq:constraint_relaxation} corresponding to the monomials $\bm{\chi}^{\bm{\alpha}}$. The localizing matrix $\vb{M}_0(g_1\bm{y})$ is linear in the optimization variables $\bm{y}$, and the \gls{mop} can be efficiently solved by any \gls{sdp} solver.

% ###############################################################################################
% ###############################################################################################

\section{Convex optimization}\label{s:ConvexOpt}

Convex optimization has become very popular in astrodynamics, as it allows to solve large optimization problems efficiently, while guaranteeing that the global optimum is found~\cite{Boyd2004}. It consists in minimizing a convex function over a convex set, and it thus requires both the objective and the constraints to be convex functions in the optimization variables. In this section, a convex relaxation of the targeting problem is derived from the quadratic approximation of the constraint in \cref{eq:quadratic_constraint}.

\subsection{Convex relaxation of the problem}

Consider the optimization variables $\bm{z}$ defined in \cref{eq:optimization_variables}. After substituting these variables into \cref{eq:objective_function_squared}, the objective function becomes:
\begin{equation}
    \min_{\bm{z}} \quad z_{200} + z_{020} + z_{002}
    \label{eq:objective_convex}
\end{equation}
which is a linear function of $\bm{z}$. The constraints in \cref{eq:nonlinear_dynamics,eq:mahalanobis_distance} are then replaced by \cref{eq:quadratic_constraint,eq:equality_constraints_nonconvex}. As the latter are nonconvex functions of $\bm{z}$, they are relaxed into the following \gls{lmi}:
\begin{equation}
    \begin{bmatrix}
        1       & z_{100} & z_{010} & z_{001} \\
        z_{100} & z_{200} & z_{110} & z_{101} \\
        z_{010} & z_{110} & z_{020} & z_{011} \\
        z_{001} & z_{101} & z_{011} & z_{002}
    \end{bmatrix} \succeq 0,
    \label{eq:psd_constraint_convex}
\end{equation}
where the notation $\succeq 0$ requires the matrix on the \gls{lhs} to be \gls{psd}. This matrix corresponds to the moment matrix of degree one, denoted as $\vb{M}_1(\bm{y})$ in the previous section. Therefore, it must be singular with rank one for the equality constraints in \cref{eq:equality_constraints_nonconvex} to be satisfied. The convex relaxation of the original problem is finally given by \cref{eq:objective_convex,eq:quadratic_constraint,eq:psd_constraint_convex}.

\subsection{Numerically robust formulation}

The convex optimization problem presented above might be numerically ill-conditioned, as the matrix $\bm{Q}$ is not full rank, and its entries might span several orders of magnitude. To improve the numerical conditioning of the problem, a first step is to compute $\bm{Q}$ in a more robust way, i.e.:
\begin{subequations}
    \begin{align}
        \bm{B} &= \bm{D}\bm{A}, \\
        \bm{Q} &= \bm{B}^T\bm{B},
    \end{align}
\end{subequations}
where $\bm{D}$ is the square root of $\bm{P}^{-1}$ and $\bm{B}\in\mathbb{R}^{6\times 9}$. This formulation is used in all numerical examples presented in this paper.

For an even better conditioning, the inequality constraint in \cref{eq:quadratic_constraint} can be reformulated as follows. Firstly, an additional optimization variable $\beta\in\mathbb{R}$ is introduced, and the vector of optimization variables $\bm{z}$ is augmented to include $\beta$:
\begin{equation}
    \bm{\tilde{z}} = \begin{bmatrix} \bm{z} \\ \beta \end{bmatrix}.
\end{equation}
Then, \cref{eq:quadratic_constraint} is split into two constraints~\cite{Mosek2024}:
\begin{subequations}\label{eq:quadratic_constraint_split}
    \begin{align}
        \norm{\bm{R}\bm{z}}_2^2 &\leq \beta, \\
        \beta + \bm{c}^T\bm{z} + r &= 0.
    \end{align}
\end{subequations}
with $\bm{R}\in\mathbb{R}^{9\times 9}$ the upper triangular factor of the QR decomposition of $\bm{B}$ such that $\bm{Q}=\bm{R}^T\bm{R}$. The convex optimization problem is then defined by \cref{eq:objective_convex,eq:quadratic_constraint_split,eq:psd_constraint_convex}.

% ###############################################################################################
% ###############################################################################################

\section{Numerical applications of the impulsive targeting problem}\label{s:ImpulsiveTargetingProblemApplications}

This section presents two applications of the polynomial optimization techniques to the targeting problem in astrodynamics. The objective is to demonstrate their competitiveness compared to the solution of a \gls{nlp} and the use of polynomial map inversion techniques. All algorithms are implemented in Julia using different modeling frameworks and solvers. The polynomial expansions are computed with \texttt{GTPSA.jl}\footnote{\url{https://github.com/bmad-sim/GTPSA.jl}}, a Julia interface to the \gls{gtpsa} library~\cite{Deniau2015}. The original \gls{nlp} is modeled using \texttt{Optimization.jl}~\cite{Dixit2023}, as it integrates seamlessly with the \texttt{DifferentialEquations.jl}~\cite{Rackauckas2017} package used to solve \cref{eq:nonlinear_dynamics_1}. The \gls{mop} is modeled using the \texttt{MomentOpt.jl}\footnote{\url{https://github.com/lanl-ansi/MomentOpt.jl}}~\cite{Weisser2019} and \texttt{SumOfSquares.jl}~\cite{Legat2017,Weisser2019} packages, while the convex optimization problems are modeled using \texttt{Convex.jl}~\cite{Udell2014}. The \gls{nlp} problem is solved with \gls{ipopt}~\cite{Wachter2006,HSL2023}, while the \gls{sdp} and convex ones are solved with Mosek~\cite{Mosek2025}.

\subsection{Orbit Correction in the R2BP}
The first test case is an orbital correction scenario, as introduced in \cref{eq:objective_function,eq:nonlinear_dynamics,eq:mahalanobis_distance}. In this example the ballistic dynamics in \cref{eq:nonlinear_dynamics_1} is that of the \gls{r2bp}. After the initial maneuver, the following \glspl{eom} govern the motion of the spacecraft:
\begin{equation}
    \bm{f}(\bm{x}(t),t) = 
    \begin{bmatrix}
        \bm{v} \\
        -\frac{\mu}{\norm{\bm{r}}_2^3}\bm{r}
    \end{bmatrix},
\end{equation}
where $\bm{x}(t)$ is given by \cref{eq:state_vector} and $\mu$ is the standard gravitational parameter of the central body. The units are chosen such that $\mu=1$, the nominal orbit radius is $r=1$, and the nominal orbit period is $T=2\pi$. At time $t_0=0$, the spacecraft has deviated from its nominal orbit, and its state is given by:
\begin{equation}
    \begin{aligned}
        \bm{x}_0^- &= \overline{\bm{x}}_0 + \Delta\bm{x}_0 \\
        &= \left[ 1 \quad 0 \quad 0 \quad 0 \quad \sqrt{1/2} \quad \sqrt{1/2} \right]^T + \left[ 10^{-5} \quad 10^{-5} \quad 10^{-5} \quad 0 \quad 0 \quad 0 \right]^T.
    \end{aligned}
\end{equation}
The objective is to steer the spacecraft back to its nominal orbit in half orbit period, i.e. target the final state:
\begin{equation}
    \overline{\bm{x}}_f = \left[-1 \quad 0 \quad 0 \quad 0 \quad -\sqrt{1/2} \quad -\sqrt{1/2}\right]^T.
\end{equation}
at $t_f=\pi$. The constraint in \cref{eq:mahalanobis_distance} is thus enforced with target distance $d=\num{0.1}$ and covariance matrix $\bm{P}$ given by:
\begin{equation}
    \bm{P} = \mathrm{diag}\left(\left[\sigma_r^2 \quad \sigma_r^2 \quad \sigma_r^2 \quad \sigma_{v}^2 \quad \sigma_{v}^2 \quad \sigma_{v}^2\right]\right).
\end{equation}
where $\sigma_r=\num{0.1}$ and $\sigma_v=10^{-3}\sigma_r$ are the standard deviations on the components of the position and velocity vectors, respectively. To improve the numerical conditioning of the problem, the \gls{da} variables in \cref{eq:delta_v_da} are scaled by a constant coefficient $s=10^{-3}$, i.e.:
\begin{equation}
    \da{\Delta\bm{v}} = \bm{0}_{2\times 1} + s\cdot\vdda{v},
    \label{eq:delta_v_da_scaled}
\end{equation}
and the flow in \cref{eq:final_state_expansion} is computed using \cref{eq:delta_v_da_scaled} rather than \cref{eq:delta_v_da}. The optimal maneuver $\Delta\bm{v}^*$ is thus retrieved by multiplying the optimal solution to the corresponding problem by $s$. Four different problems are solved to compare the optimization techniques described in this paper. They are summarized in \cref{tab:summary}.

\begin{table}[ht]
    \centering
    \caption{Formulations of the optimization problems in the Keplerian dynamics scenario.}
    \label{tab:summary}
    \begin{tabular}{lllll}
        \toprule
        \# & Defining equations & Type & Framework & Solver \\
        \midrule
        A & \cref{eq:objective_function_squared,eq:nonlinear_dynamics,eq:mahalanobis_distance} & \gls{nlp} & \texttt{Optimization.jl} & \gls{ipopt} \\
        B & \cref{eq:objective_mop,eq:constraints_mop} & \gls{mop} & \texttt{MomentOpt.jl} & Mosek \\
        C & \cref{eq:objective_convex,eq:quadratic_constraint,eq:psd_constraint_convex} & convex & \texttt{Convex.jl} & Mosek \\
        D & \cref{eq:objective_convex,eq:quadratic_constraint_split,eq:psd_constraint_convex} & convex & \texttt{Convex.jl} & Mosek \\
        \bottomrule
    \end{tabular}
\end{table}

The solutions to these problems are given in \cref{tab:optimal_maneuvers}, in which both the components and magnitude of the optimal maneuvers are reported.

\begin{table}[ht]
    \caption{Optimal maneuvers in the Keplerian dynamics scenario.}
    \newcommand{\scinum}[1]{\tablenum[exponent-mode=scientific, round-precision=6, round-mode=places, table-format=-1.6e-1]{#1}}
    \begin{filecontents*}{Journal/data/results.csv}
pbm,dvx,dvy,dvz,dvn,dsq,eqr
A,-9.020764216763204e-6,-6.836395827453148e-6,-6.836395827453146e-6,1.3222964980429806e-5,9.982015346243767e-9,0.0
B,-9.020835349191185e-6,-6.836351612549729e-6,-6.8363516170980284e-6,1.3222967790939649e-5,-7.528310804610916e-10,0.0
C,-9.020777447546462e-6,-6.836389896715999e-6,-6.836389895873661e-6,1.3222967873608667e-5,-1.0855893874811562e-9,7.321633493169617e-12
D,-9.020624258083379e-6,-6.836491119427361e-6,-6.836490912918524e-6,1.3222967928341544e-5,-1.2025608980836822e-9,1.768767437745379e-12
\end{filecontents*}
\csvreader[
        head to column names,
        before reading = \begin{center},
        tabular = ccccc,
        table head = \toprule \# & {$\Delta v_x^*$} & {$\Delta v_y^*$} & {$\Delta v_z^*$} & {$\norm{\Delta\bm{v}^*}_2$} \\\midrule,
        table foot = \bottomrule,
        after reading = \end{center},
    ]{Journal/data/results.csv}{}{%
        \pbm &
        \scinum{\dvx} &
        \scinum{\dvy} &
        \scinum{\dvz} &
        \scinum{\dvn}
    }
    \label{tab:optimal_maneuvers}
\end{table}

The violation of the terminal constraint in \cref{eq:mahalanobis_distance} and of the equality constraints in \cref{eq:equality_constraints_nonconvex} are then assessed by computing the following quantities:
\begin{subequations}
    \begin{align}
        \epsilon_1 &= \left[\bm{x}^*(t_f) - \overline{\bm{x}}_f\right]^T \bm{P}^{-1} \left[\bm{x}^*(t_f) - \overline{\bm{x}}_f\right] - d^2 \\
        \epsilon_2 &= \norm{\bm{\zeta}^*\left(\bm{\zeta}^*\right)^T - \bm{Z}^*}_\infty
    \end{align}
\end{subequations}
where $\bm{x}^*(t_f)$ is computed from \cref{eq:nonlinear_dynamics} using the optimal maneuver $\Delta\bm{v}^*$, $\bm{\zeta}^* = \left[1 \quad \left[z^*_{\bm{\alpha}}\right]^T_{\abs{\bm{\alpha}}=1}\right]^T$, and $\bm{Z}^*$ is the \gls{lhs} of \cref{eq:psd_constraint_convex} evaluated at $\bm{z}^*$. A negative value of $\epsilon_1$ indicates that the final state is inside the target ellipsoid. Their values are reported in \cref{tab:constraints_violations}.

\begin{table}[ht]
    \caption{Constraints violations in the Keplerian dynamics scenario.}
    \newcommand{\scinum}[1]{\tablenum[exponent-mode=scientific, round-precision=6, round-mode=places, table-format=-1.6e-2]{#1}}
    \csvreader[
        head to column names,
        before reading = \begin{center},
        tabular = ccc,
        table head = \toprule \# & {$\epsilon_1$} & {$\epsilon_2$} \\\midrule,
        table foot = \bottomrule,
        after reading = \end{center},
    ]{Journal/data/results.csv}{}{%
        \pbm &
        \scinum{\dsq} &
        \scinum{\eqr}
    }
    \label{tab:constraints_violations}
\end{table}

\Cref{tab:optimal_maneuvers,tab:constraints_violations} demonstrate that all methods converge to the same solution within a very small margin of error. The \gls{nlp} and \gls{mop} formulations do not augment the free vector with the squares of the $\Delta\bm{v}$ components, and thus $\epsilon_2$ is identically zero for problems A and B. The solutions to problems C and D also satisfy \cref{eq:equality_constraints_nonconvex} within the tolerance of the underlying solver, meaning that the solution to the original problem is recovered with high accuracy. These results demonstrate that the proposed methods are a viable alternative to more widespread techniques, as they combine the accuracy of the \gls{nlp} formulation with the efficiency and convergence guarantee needed for onboard applications.

\subsection{Station-Keeping in the CR3BP}\label{ss:impulsive_station_keeping}

The optimization techniques are now applied to the \gls{tpa} to \gls{sk} in the \gls{cr3bp}~\cite{Fu2020}, in which the following \glspl{eom} govern the motion of the spacecraft:
\begin{equation}\label{eq:cr3bp_ballistic_dynamics}
    \bm{f}(\bm{x}(t),t) =
    \begin{bmatrix}
        \bm{v} \\[6pt]
        2\,\bm{\hat{z}}\!\times\!\bm{v}
        + \bm{r}
        - (1-\mu)\dfrac{\bm{r}+\mu\bm{\hat{x}}}{r_1^{3}}
        - \mu\dfrac{\bm{r}-(1-\mu)\bm{\hat{x}}}{r_2^{3}}
    \end{bmatrix},
\end{equation}
where $\mu \in \mathbb{R}_+$ is the mass ratio of the secondary body to the total system mass. The quantities $r_{1} = \|\bm{r} + \mu\bm{\hat{x}}\|$ and $r_{2} = \|\bm{r} - (1-\mu)\bm{\hat{x}}\|$ denote the distances from the spacecraft to the primary and secondary bodies, respectively. In the rotating synodic frame, the primary is located at $(-\mu,0,0)$ and the secondary at $(1-\mu,0,0)$. Consider a spacecraft in a slightly displaced position with respect to its nominal libration point orbit. The objective is to perform an impulsive maneuver that maintains the spacecraft in the vicinity of the nominal trajectory, while minimizing a weighted sum between the maneuver cost and the position errors at a number of downstream target points. The cost function to minimize is thus written as:
\begin{equation}
    J = \Delta\bm{v}_{t_c}^T\bm{Q}\Delta\bm{v}_{t_c}^T + \sum_{i=1}^{N_\textrm{t}} \bm{p}_{t_i}^T\bm{R}_i\bm{p}_{t_i},
    \label{eq:tpa_cost_function}
\end{equation}
where $\Delta\bm{v}_{t_c}$ is the impulsive maneuver performed at the control time $t_c$, $\bm{p}_{t_i}$ are the position errors with respect to the nominal trajectory at the ${N_\textrm{t}}$ target times $t_i$, and $\bm{Q},\bm{R}_i$ are $3\times 3$ diagonal matrices that weight the two competing terms. The initial state of the spacecraft is taken at $t_0<t_c$, and the target times are ordered chronologically such that $t_c<t_1<\ldots<t_{{N_\textrm{t}}}$.

Under the assumption that the dynamics can be linearized around the unperturbed trajectory, \cref{eq:tpa_cost_function} has the following analytical solution~\cite{Howell1993}. First, the position errors $\bm{p}_{t_i}$ are expressed as a linear combination of the initial perturbation and of the impulsive maneuver, i.e.:
\begin{equation}
    \bm{p}_{t_i} = \bm{B}_{t_i,t_c}\Delta\bm{v}_{t_c} + \bm{B}_{t_i,t_0}\bm{e}_{t_0} + \bm{A}_{t_i,t_0}\bm{p}_{t_0},
\end{equation}
where $\bm{p}_{t_0},\bm{e}_{t_0}$ are the initial deviations in position and velocity, respectively, and $\bm{A}_{t_i,t_j},\bm{B}_{t_i,t_j}$ are the $3\times 3$ submatrices of the \gls{stm} propagated along the nominal trajectory in the time interval $[t_j,t_i]$, where $j=\{0,c\}$:
\begin{equation}
    \bm{\Phi}_{t_i,t_j} = 
    \begin{bmatrix}
        \bm{A}_{t_i,t_j} & \bm{B}_{t_i,t_j} \\
        \bm{C}_{t_i,t_j} & \bm{D}_{t_i,t_j}
    \end{bmatrix}.
\end{equation}
The optimal maneuver is then obtained by setting the Jacobian of \cref{eq:tpa_cost_function} with respect to $\Delta\bm{v}_{t_c}$ equal to zero, and solving the resulting vector equality. The final expression for the optimal maneuver is:
\begin{equation}
    \label{eq:Journal/figs/tpa_opt_linear}
    \Delta\bm{v}_{t_c} = -\left(\bm{Q} + \sum_{i=1}^{N_\textrm{t}} \bm{B}_{t_i,t_c}^T\bm{R}_i\bm{B}_{t_i,t_c}\right)^{-1}
    \left[\sum_{i=1}^{N_\textrm{t}} \bm{B}_{t_i,t_c}^T\bm{R}_i\left(\bm{B}_{t_i,t_0}\bm{e}_{t_0} + \bm{A}_{t_i,t_0}\bm{p}_{t_0}\right)\right].
\end{equation}

Although \cref{eq:Journal/figs/tpa_opt_linear} provides a closed-form solution to the \gls{tpa} problem, the hypotheses introduced in its derivation might lead to large position errors at the downstream target points. This is especially true for large initial deviations or long propagation time spans, which cause the true trajectory to depart from the vicinity of the nominal orbit where the linear approximation holds. To maintain the spacecraft closer to its desired path, it is of interest to relax this assumption and compute a \gls{sk} maneuver that accounts for the nonlinearities in the underlying dynamics. As for the Keplerian dynamics example, the most accurate approach would be to formulate the \gls{tpa} problem as a \gls{nlp} problem, thus accounting for the true relative dynamics in its solution. This formulation, however, could quickly become computationally expensive, as every iteration of the \gls{nlp} solver requires the trajectory to be numerically propagated together with its partial derivatives with respect to the optimal maneuver needed to compute the update. Alternative approaches are thus desirable to reduce the computational burden while still providing an accurate solution.

Building on these considerations, two approaches to the solution to the \gls{tpa} problem are analyzed in this paper. These methods are based on higher order Taylor expansions of both the dynamics and the objective function in~\cref{eq:tpa_cost_function}. The latter are efficiently computed using \gls{da} techniques as described hereafter. The initial deviations from the nominal trajectory and the impulsive maneuver are firstly initialized similarly to \cref{eq:delta_v_da} as:
\begin{subequations}
    \label{eq:tpa_da_variables}
    \begin{align}
        \da{\bm{x}(t_0)} - \overline{\bm{x}}(t_0) &= \vdda{x}_0 \label{eq:tpa_da_dx0} \\
        \da{\Delta\bm{v}_{t_c}} &= \vdda{v}, \label{eq:tpa_da_dv}
    \end{align}
\end{subequations}
where $\overline{\bm{x}}(t_0)$ is the nominal initial state at $t_0$, $\da{\bm{x}(t_0)}$ the perturbed state, and $\da{\Delta\bm{v}_{t_c}}$ the impulsive maneuver at $t_c$. \Cref{eq:tpa_da_dx0} represents an infinitesimal deviation in the initial position and velocity of the spacecraft, while \cref{eq:tpa_da_dv} models an infinitesimal maneuver at $t_c$. The $k$-th order Taylor expansion of \cref{eq:tpa_cost_function} is then computed by taking into account \cref{eq:tpa_da_variables} for the propagation of the nominal trajectory in the \gls{da} framework, thus obtaining:
\begin{equation}
    \label{eq:tpa_poly_cost}
    \da{J} = \ty{k}{J}{\vdda{x}_0,\vdda{v}}.
\end{equation}
The first method~\cite{Fu2020} leverages Taylor map inversion techniques to compute a polynomial relationship between the initial perturbation $\vdda{x}_0$ and the optimal maneuver $\Delta\bm{v}_{t_c}$. This map is expressed as~\cite{Fu2020}:
\begin{equation}
    \label{eq:tpa_dv_map}
    \da{\Delta\bm{v}_{t_c}} = \ty{k}{\Delta\bm{v}_{t_c}}{\vdda{x}_0},
\end{equation}
and can be subsequently evaluated for several realizations of the initial deviations $\vdda{x}_0$ within the domain of validity of the map itself.

The second technique is instead based on the moment-\gls{sos} hierarchy introduced in \cref{s:mom-SOS}. Given an initial perturbation $\Delta\bm{x}_0$, \cref{eq:tpa_poly_cost} is firstly evaluated in $\Delta\bm{x}_0$ to obtain:
\begin{equation}
    \label{eq:tpa_poly_cost_given_dx}
    \left.\ty{k}{J}{\vdda{x}_0,\vdda{v}}\right|_{\vdda{x}_0=\Delta\bm{x}_0} = \ty{k}{J}{\vdda{v}},
\end{equation}
which is a polynomial expansion of the cost that depends only on the infinitesimal maneuver $\vdda{v}$. A \gls{pop} is then setup as:
\begin{equation}
    \label{eq:tpa_pop_obj}
    \min_{\vdda{v}} \ty{k}{J}{\vdda{v}}
\end{equation}
subject to:
\begin{equation}
    \label{eq:tpa_pop_constraint}
    \norm{\vdda{v}}_2^2 \leq \Delta v_{max}^2,
\end{equation}
where \cref{eq:tpa_pop_constraint} constraints the magnitude of the maneuver to values not greater than $\Delta v_{max}$. The optimization problem defined by \cref{eq:tpa_pop_obj,eq:tpa_pop_constraint} is then reformulated as a \gls{mop} and solved using the moment-\gls{sos} technique.

The numerical analyses that follow reproduce the results presented in Fu et al~\cite{Fu2020} for the Earth-Moon \gls{cr3bp} system. The mass parameter $\mu$, the characteristic length $\mathrm{LU}$, and the characteristic time $\mathrm{TU}$ for this system are listed in \cref{tab:cr3bp_quantities}.

\begin{table}[!ht]
    \caption{Mass parameter and characteristic quantities for the Earth-Moon \gls{cr3bp} system.}
    \label{tab:cr3bp_quantities}
    \centering
    \begin{tabular}{ccl}
        \toprule
        parameter & value & description \\ \midrule
        $\mu$ & \num{0.012150584395829193} & mass parameter \\
        $\mathrm{LU}$ & \SI{384400.0}{\km} & characteristic length \\
        $\mathrm{TU}$ & \SI{375190.2618944357}{\s} & characteristic time \\ \bottomrule
    \end{tabular}
\end{table}

The nominal orbit is a member of the $L_2$ northern Halo family with period equal to \num{5.9383} days. The unperturbed initial state at $t_0$ coincides with the periapsis of the Halo orbit. In normalized units this is equal to:
\begin{equation}
    \label{eq:x0_nominal_val}
    \overline{\bm{x}}(t_0) = 
    \begin{bmatrix}
        0.9876247155183294 \\
        0.0 \\
        -0.004607818604397411 \\
        0.0 \\
        2.271672949128728 \\
        0.0
    \end{bmatrix}.
\end{equation}
The impulsive maneuver is performed at $t_c=2\,\mathrm{days}$, and two control points are placed at $t_1=4.5996\,\mathrm{days}$ and $t_2=28.7710\,\mathrm{days}$, respectively. The perturbations on the initial orbit state are instead sampled in the eigenspace of $\bm{J}_{\Delta\bm{v}}^T\bm{J}_{\Delta\bm{v}}$, where $\bm{J}_{\Delta\bm{v}}$ is the Jacobian of the optimal maneuver $\Delta\bm{v}_{t_c}$ evaluated at $\Delta\bm{x}_0=\bm{0}$. This work uses a $100\times 100$ grid with $\bm{\nu}_i/\sqrt{\lambda_i}$ uniformly sampled in the interval $\pm\num{5e-3}$, where $i=\{1,2\}$, $\lambda_i$ are the two largest eigenvalues of $\bm{J}_{\Delta\bm{v}}^T\bm{J}_{\Delta\bm{v}}$, and $\bm{\nu}_i$ are the corresponding eigenvectors~\cite{Fu2020}. Moreover, all results are obtained with a fourth order Taylor expansion of the dynamics and of the constraints.

\Cref{fig:tpa_dv_opt} shows the optimal maneuver magnitude evaluated on this grid. \Cref{subfig:tpa_dv_opt_lin} correspond to the linearized approach, while \cref{subfig:tpa_dv_opt_pop} correspond to the solution to the \gls{pop}. The results obtained with \gls{da} map inversion are visually indistinguishable from the latter, and thus not displayed.

\begin{figure}[!ht]
    \centering
    \subcaptionbox{Linear solution.\label{subfig:tpa_dv_opt_lin}}{\includegraphics[clip, trim=1.25cm 0.25cm 0.25cm 0cm,width=0.5\textwidth]{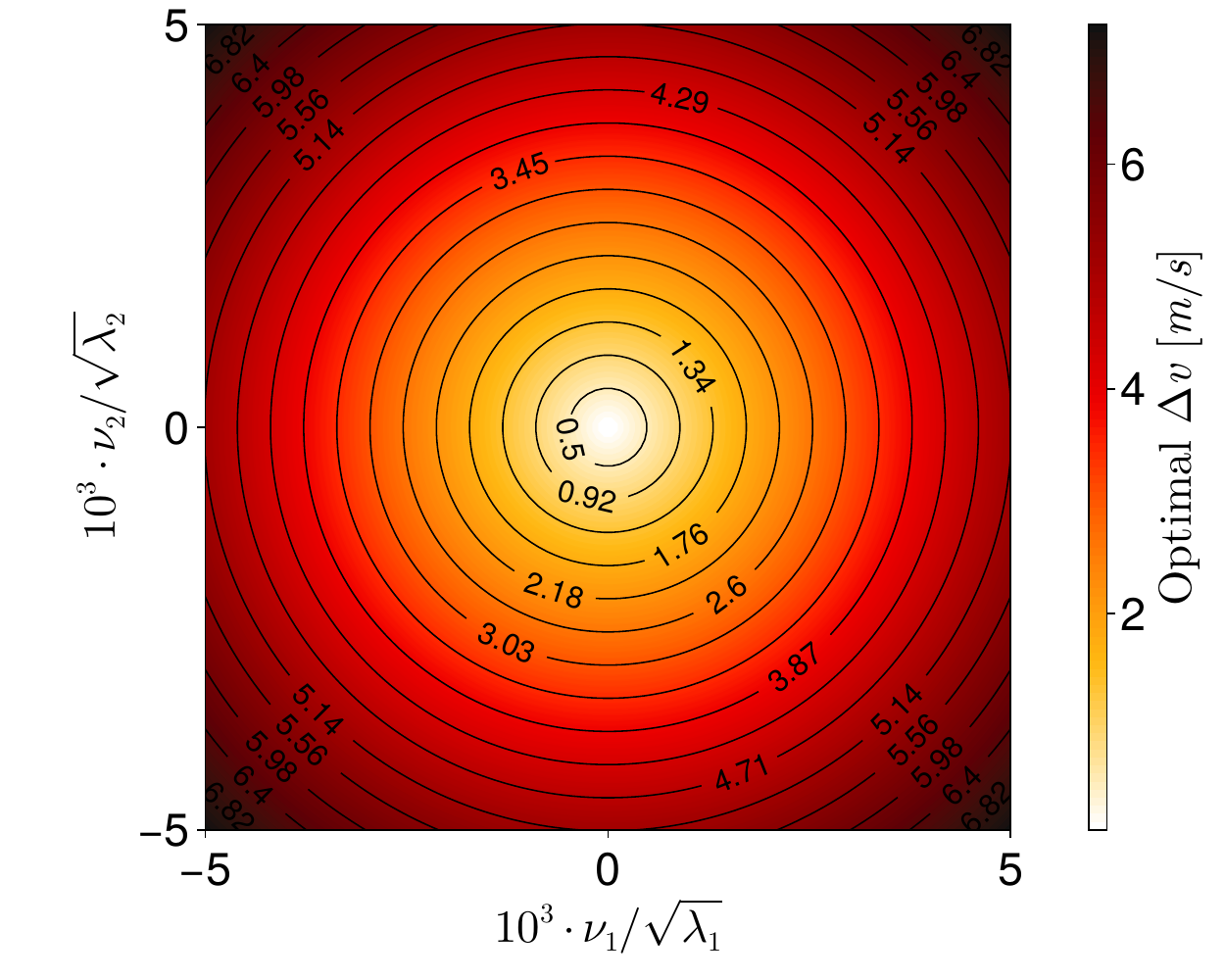}}%
    \hfill
    \subcaptionbox{\Gls{pop} solution.\label{subfig:tpa_dv_opt_pop}}{\includegraphics[clip, trim=1.25cm 0.25cm 0.25cm 0cm,width=0.5\textwidth]{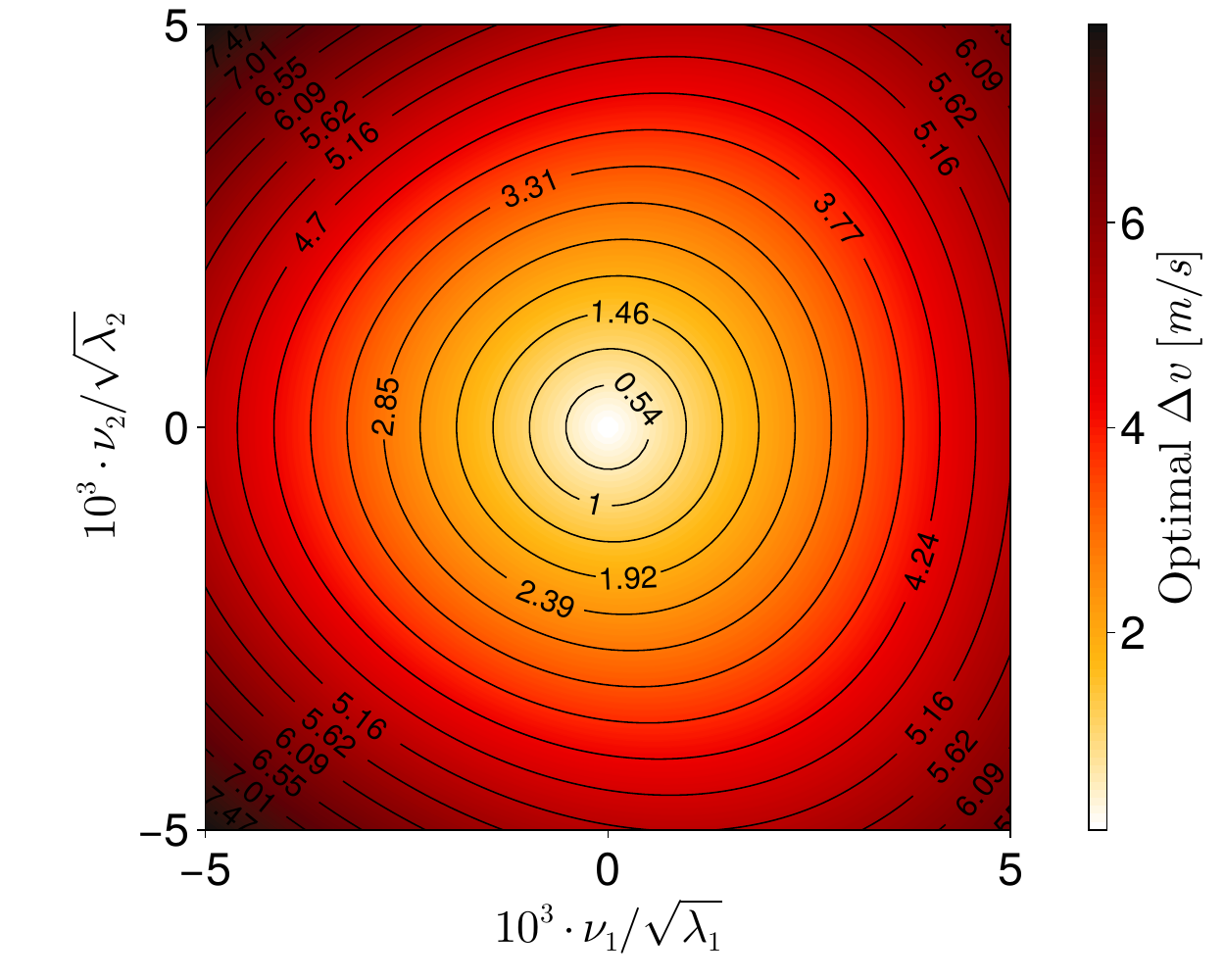}}
    \caption{Optimal maneuvers solutions to the \gls{tpa} problem.}
    \label{fig:tpa_dv_opt}
\end{figure}

\begin{figure}[!ht]
    \centering
    \subcaptionbox{Linear solution.\label{subfig:tpa_dv_err_lin}}{\includegraphics[clip, trim=0.5cm 0.25cm 0.25cm 0cm,width=0.5\textwidth]{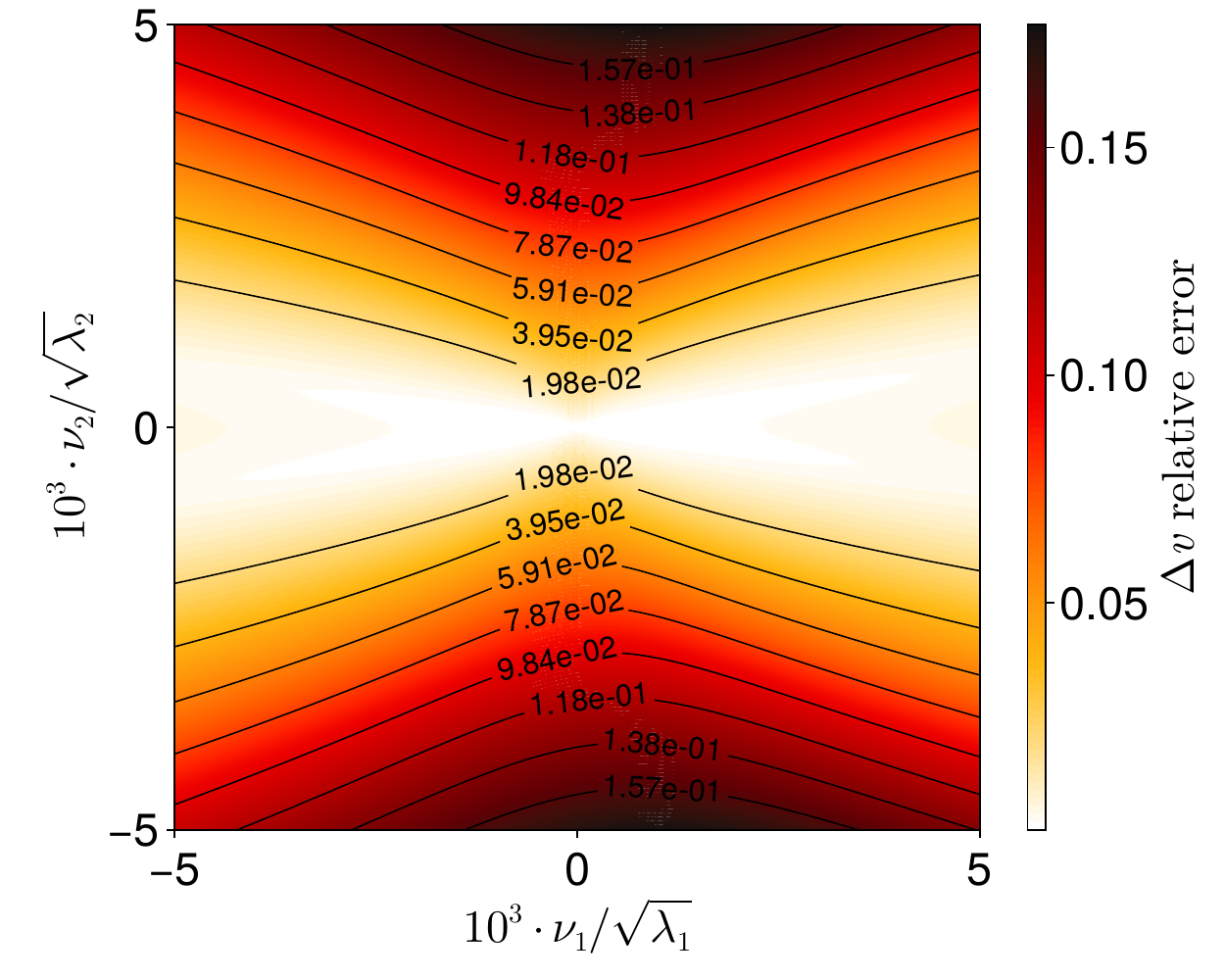}}%
    \hfill
    \subcaptionbox{\Gls{pop} solution.\label{subfig:tpa_dv_err_pop}}{\includegraphics[clip, trim=0.5cm 0.25cm 0.25cm 0cm,width=0.5\textwidth]{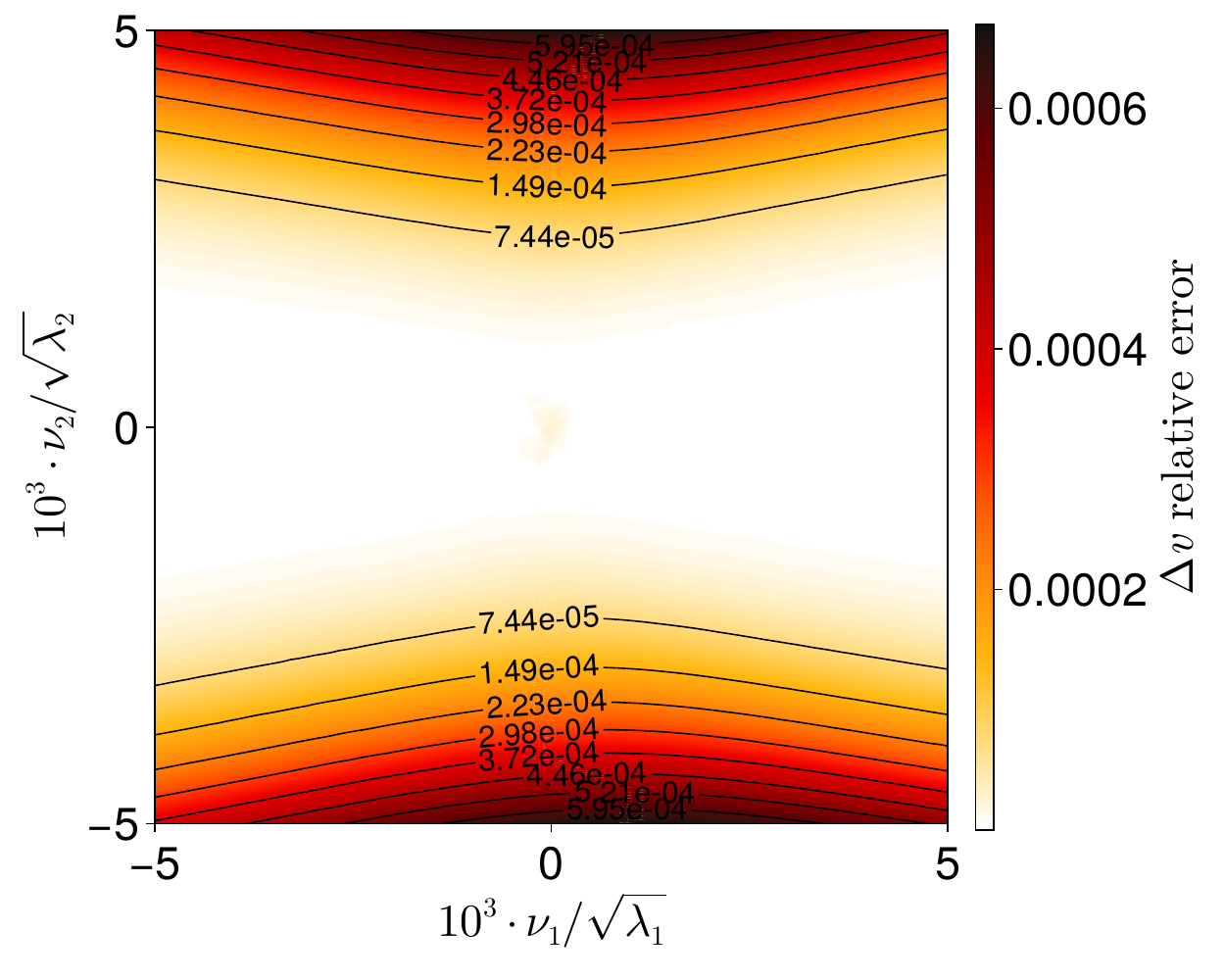}}
    \caption{Relative errors in the optimal maneuvers compared to the \gls{nlp} solution to the \gls{tpa} problem.}
    \label{fig:tpa_dv_err}
\end{figure}

\Cref{subfig:tpa_dv_opt_lin} clearly shows the limitations of the linearized technique, as contour lines are concentric circumferences centered on the nominal solution. In contrast, \cref{subfig:tpa_dv_opt_pop} depicts a more complex structure which closely matches the pointwise solution to the \gls{nlp} formulation. This is demonstrated in \cref{fig:tpa_dv_err}, which shows the relative errors in the optimal maneuver magnitude with respect to the solution to the \gls{nlp} problem. Comparing \cref{subfig:tpa_dv_err_lin} with \cref{subfig:tpa_dv_err_pop}, it is in fact seen that the polynomial approach is about three orders of magnitude more accurate than the linearized method.

\begin{table}[!ht]
    \caption{Errors in the computed optimal maneuvers for the \gls{tpa} problem. Mean and maximum relative difference between \gls{nlp} solution and other optimization techniques.}
    \newcommand{\scinum}[1]{\tablenum[exponent-mode=scientific, round-precision=3, round-mode=places, table-format=1.3e-1]{#1}}
    \begin{filecontents*}{Journal/data/errors_statistics_4D9_100x100.csv}
method,dverrmean,dverrmax,drerrmeanti,drerrmaxti,drerrmeantii,drerrmaxtii
linear,0.06693578925543724,0.17696385370210607,0.00022146040054839123,0.0006669238711559908,0.0012508986901976814,0.004462692187802194
map inversion,0.0001374618603508769,0.0006716146168120227,4.6934578501911675e-7,2.368028177535581e-6,2.7713287713068435e-6,1.559009867153447e-5
GMP (primal),0.0001382411538710121,0.0006697556485130614,4.698237748701755e-7,2.366185899137746e-6,2.7750019429272895e-6,1.5504709481683417e-5
SOS (dual),0.00013730243494308256,0.0006696587497003019,4.6847836734922684e-7,2.361324565245795e-6,2.7707785898308935e-6,1.5463154892055227e-5
\end{filecontents*}
\csvreader[
        head to column names,
        before reading = \begin{center},
        tabular = rcc,
        table head = \toprule method & {$\epsilon_{\Delta v,\mathrm{mean}}$} & {$\epsilon_{\Delta v,\mathrm{max}}$} \\\midrule,
        table foot = \bottomrule,
        after reading = \end{center},
    ]{Journal/data/errors_statistics_4D9_100x100.csv}{}{%
        \method &
        \scinum{\dverrmean} &
        \scinum{\dverrmax}
    }
    \label{tab:tpa_maneuvers_errors}
\end{table}

The mean and maximum relative errors extracted from \cref{fig:tpa_dv_err} are reported in \cref{tab:tpa_maneuvers_errors}. The last three rows correspond to the solution obtained with \gls{da} map inversion, and to those obtained by solving the \gls{pop} defined by \cref{eq:tpa_pop_obj,eq:tpa_pop_constraint}, respectively. \Gls{gmp} refers to the solution to the \gls{lp} problem in \cref{eq:lp}, while \gls{sos} indicates that the dual to this problem is solved instead. As expected, these three solutions are very close to each others, as they are based on the same fourth order Taylor expansion of the problem. In contrast, the linearized solution is on average two orders of magnitude less accurate, and up to three orders of magnitude worse across the selected sampling space.

\begin{figure}[!ht]
    \centering
    \subcaptionbox{Linear solution at $t_1$.\label{subfig:tpa_pos_err_lin_t1}}{\includegraphics[clip, trim=0.5cm 0.25cm 0cm 0cm,width=0.5\textwidth]{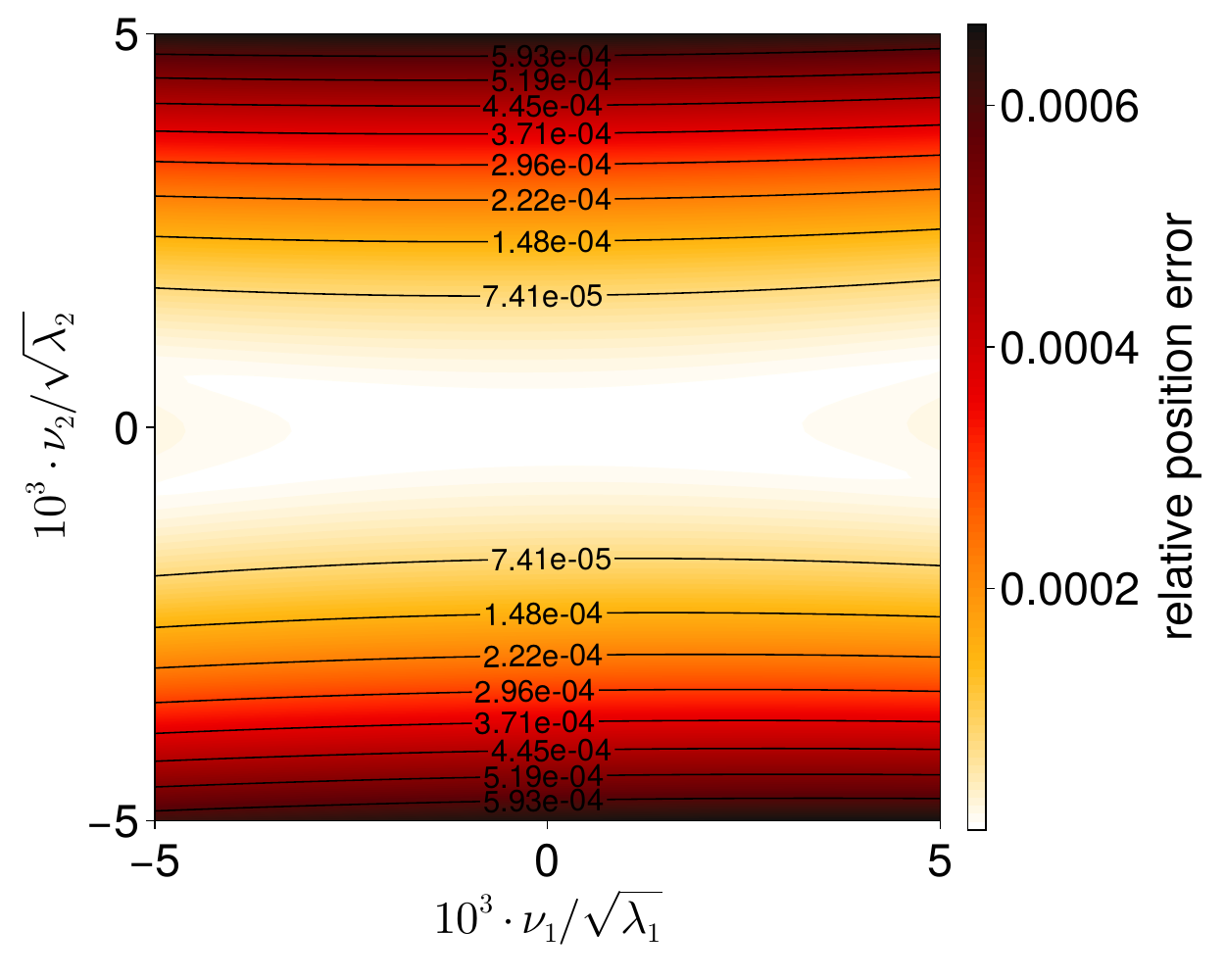}}%
    \hfill
    \subcaptionbox{\Gls{pop} solution at $t_1$.\label{subfig:tpa_pos_err_pop_t1}}{\includegraphics[clip, trim=0.25cm 0.25cm 0.25cm 0cm,width=0.5\textwidth]{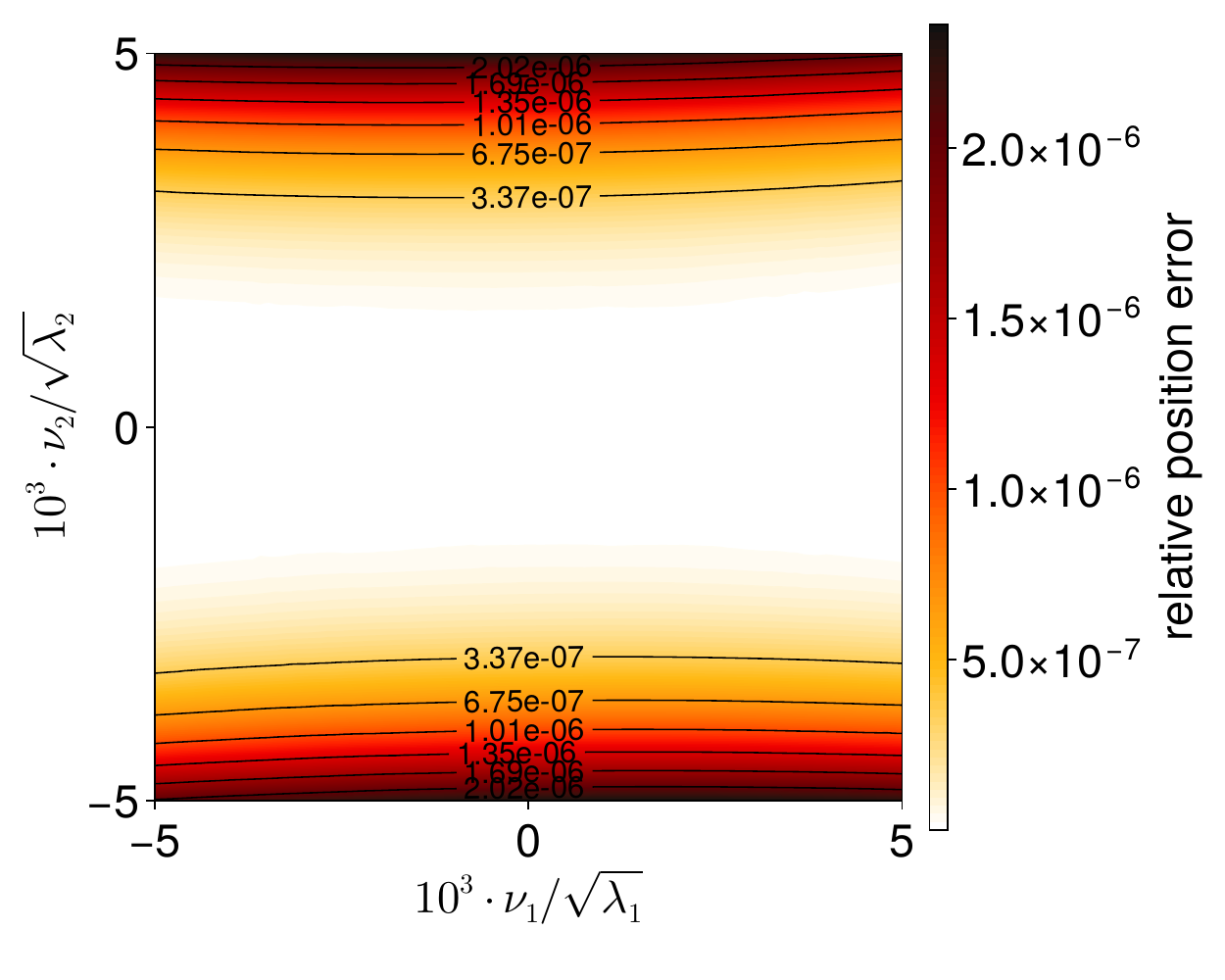}}
    \caption{Relative error in position at the first target point compared to the \gls{nlp} solution to the \gls{tpa} problem.}
    \label{fig:tpa_pos_err_t1}
\end{figure}

\begin{figure}[!ht]
    \centering
    \subcaptionbox{Linear solution at $t_2$.\label{subfig:tpa_pos_err_lin_t2}}{\includegraphics[clip, trim=0.5cm 0.25cm 0cm 0cm,width=0.5\textwidth]{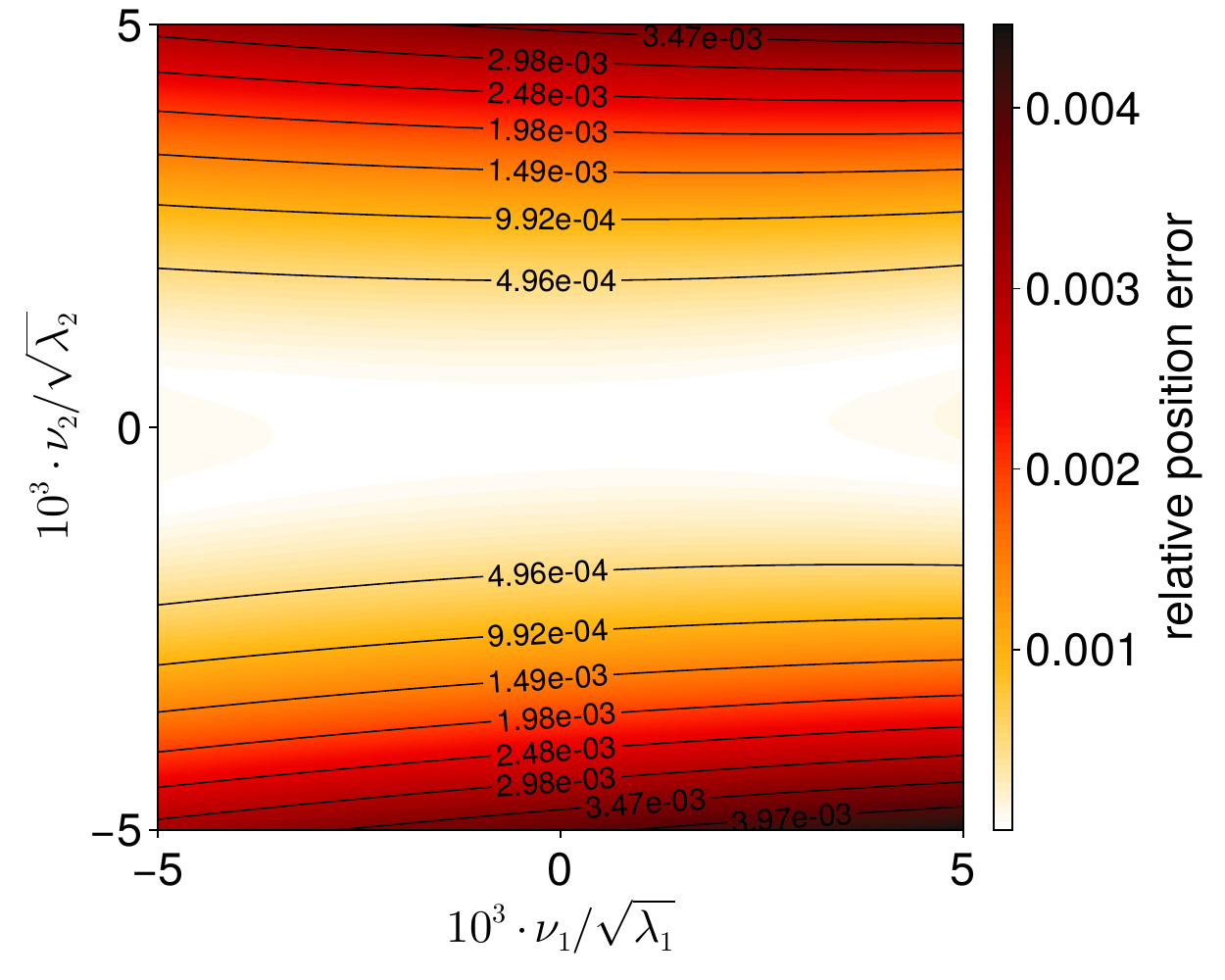}}%
    \hfill
    \subcaptionbox{\Gls{pop} solution at $t_2$.\label{subfig:tpa_pos_err_pop_t2}}{\includegraphics[clip, trim=0.25cm 0.25cm 0.25cm 0cm,width=0.5\textwidth]{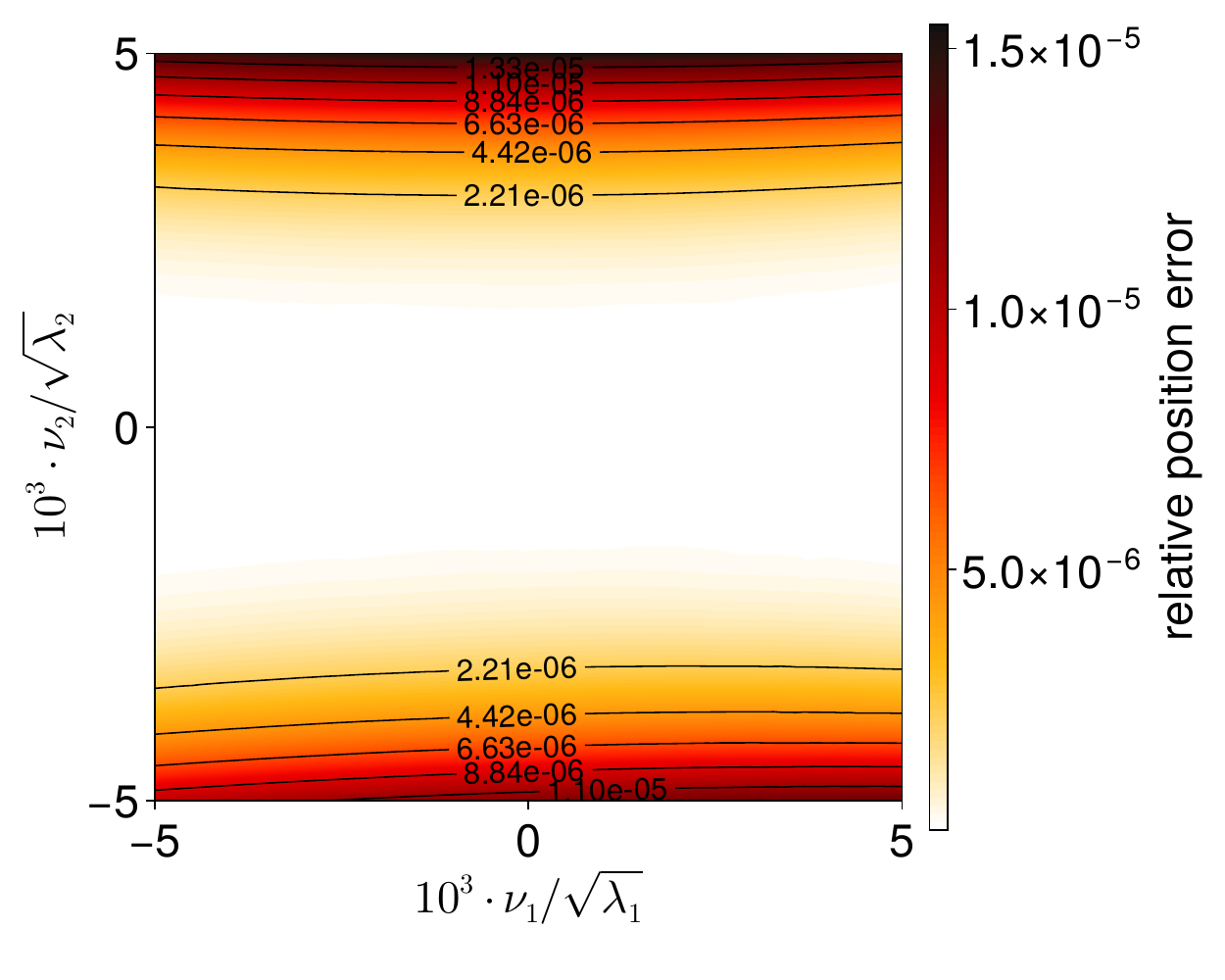}}
    \caption{Relative error in position at the second target point compared to the \gls{nlp} solution to the \gls{tpa} problem.}
    \label{fig:tpa_pos_err_t2}
\end{figure}

Similarly to \cref{fig:tpa_dv_err}, \cref{fig:tpa_pos_err_t1,fig:tpa_pos_err_t2} shows the relative errors in position at the two target points $t_1$ and $t_2$, respectively. \Cref{subfig:tpa_pos_err_lin_t1,subfig:tpa_pos_err_lin_t2} correspond to the errors between the linearized and \gls{nlp} solutions, while \cref{subfig:tpa_pos_err_pop_t1,subfig:tpa_pos_err_pop_t2} compare the nonlinear solution with that to the \gls{pop}. The errors obtained with \gls{da} map inversion are also in this case indistinguishable from those in \cref{subfig:tpa_pos_err_pop_t1,subfig:tpa_pos_err_pop_t2}.

These four plots confirm that the linearized approach can be up to three orders of magnitude less accurate than the solutions obtained with a higher order Taylor expansion of the dynamics and of the cost function. This fact is highlighted in \cref{tab:tpa_position_errors}, which reports a trend similar to that in \cref{tab:tpa_maneuvers_errors}. Indeed, also in this case the linearized solution is about two to three orders of magnitude less accurate than its polynomial based counterparts.

\begin{table}[!ht]
    \caption{Position errors at target points for the \gls{tpa} problem. Mean and maximum relative difference between \gls{nlp} solution and other optimization techniques.}
    \newcommand{\scinum}[1]{\tablenum[exponent-mode=scientific, round-precision=3, round-mode=places, table-format=1.3e-1]{#1}}
    \csvreader[
        head to column names,
        before reading = \begin{center},
        tabular = rcccc,
        table head = \toprule method & {$\epsilon_{\Delta p_1,\mathrm{mean}}$} & {$\epsilon_{\Delta p_1,\mathrm{max}}$} & {$\epsilon_{\Delta p_2,\mathrm{mean}}$} & {$\epsilon_{\Delta p_2,\mathrm{max}}$}\\\midrule,
        table foot = \bottomrule,
        after reading = \end{center},
    ]{Journal/data/errors_statistics_4D9_100x100.csv}{}{%
        \method &
        \scinum{\drerrmeanti} &
        \scinum{\drerrmaxti} &
        \scinum{\drerrmeantii} &
        \scinum{\drerrmaxtii}
    }
    \label{tab:tpa_position_errors}
\end{table}

Performance metrics for the \gls{tpa} problem are finally reported in \cref{tab:tpa_iterations,tab:tpa_runtime}. \Cref{tab:tpa_iterations} summarizes the number of iterations needed for the iterative optimization methods to converge. These are the solution to the \gls{nlp} problem using \gls{ipopt}, and the solution to the \gls{sdp} problems that arise from the relaxation of the \glspl{pop}. The three values $i_{\mathrm{min}}$, $i_{\mathrm{mean}}$, and $i_{\mathrm{max}}$ correspond to the minimum, mean, and maximum number of iterations over the $100\times 100$ sampling grid, respectively. Solving the \gls{nlp} always require the smallest number of iterations among the three methods. The minimum and mean values are in fact approximately one fourth than those obtained with the second best approach. Solutions to the primal and dual \gls{mop} require instead a similar number of iterations, with the \gls{gmp} solution taking the edge in the minimum and mean values. In contrast, the maximum number of iterations is smaller for the solution to the dual problem.

\begin{table}[!ht]
    \caption{Minimum, mean, and maximum number of iterations for solving the \gls{tpa} problem.}
    \newcommand{\scinum}[1]{\tablenum[round-precision=3, round-mode=places, table-format=2.3]{#1}}
    \begin{filecontents*}{Journal/data/iterations_statistics_4D9_100x100.csv}
method,itrmin,itrmean,itrmax
nonlinear,2,3.1032,16
GMP (primal),8,12.2105,27
SOS (dual),10,15.152,19
\end{filecontents*}
\csvreader[
        head to column names,
        before reading = \begin{center},
        tabular = rccc,
        table head = \toprule method & {$i_{\mathrm{min}}$} & {$i_{\mathrm{mean}}$} & {$i_{\mathrm{max}}$} \\\midrule,
        table foot = \bottomrule,
        after reading = \end{center},
    ]{Journal/data/iterations_statistics_4D9_100x100.csv}{}{%
        \method &
        \itrmin &
        \scinum{\itrmean} &
        \itrmax
    }
    \label{tab:tpa_iterations}
\end{table}

\Cref{tab:tpa_runtime} reports instead the mean and median runtimes for all optimization methods presented in this paper. In contrast to \cref{tab:tpa_iterations}, these values were not averaged over the sampling grid. Instead, the solution for a specific realization of the initial perturbation was benchmarked multiple times using the dedicated Julia package \texttt{BenchmarkTools.jl}\footnote{\url{https://github.com/JuliaCI/BenchmarkTools.jl}}.

\begin{table}[!ht]
    \caption{Mean and median runtimes for solving the \gls{tpa} problem.}
    \newcommand{\scinum}[1]{\tablenum[round-precision=3, round-mode=places, table-format=4.3]{#1}}
    \newcommand{\expnum}[1]{\tablenum[round-precision=0, round-mode=places, table-format=1e1, exponent-mode=scientific]{#1}}
    \begin{filecontents*}{Journal/data/runtime_benchmark.csv}
method,timemean,timemedian,calls
linear solution,1.2982084648070908,1.301083,10000
nonlinear solution,21.48291217167382,21.499916,10000
polynomial map computation,2684.5624585,2684.5624585,1
polynomial map inversion,0.3672093614,0.360334,1
polynomial map evaluation,0.0309467587,0.031584,10000
NLP solution of POP,0.4442180371,0.416667,10000
GMP (primal) solution of POP,1.9248091328967643,1.8195625,10000
SOS (dual) solution of POP,1.6467723414634148,1.630083,10000
\end{filecontents*}
\csvreader[
        head to column names,
        before reading = \begin{center},
        tabular = rccc,
        table head = \toprule method & {$t_{\mathrm{mean}}$, \unit{\milli\second}} & {$t_{\mathrm{median}}$, \unit{\milli\second}} & calls on $100\times 100$ grid\\\midrule,
        table foot = \bottomrule,
        after reading = \end{center},
    ]{Journal/data/runtime_benchmark.csv}{}{%
        \method &
        \scinum{\timemean} &
        \scinum{\timemedian} &
        \expnum{\calls}
    }
    \label{tab:tpa_runtime}
\end{table}

As expected, the linearized approach is the most efficient when a single solution is sought. The solution to the \gls{nlp} problem is about twenty times slower than the former, while the polynomial based methods are heavily penalized by the upfront cost required to compute the higher order Taylor map. However, if multiple solutions need to be computed around the same nominal trajectory, the \gls{da} map inversion technique might become the most efficient optimization method. As reported on the last column of \cref{tab:tpa_runtime}, the polynomial map is in fact computed and inverted only once. Each realization of the initial perturbation then requires a single and fast evaluation of the precomputed inverse map. This would be the case in a \gls{mc} simulation where several realizations of the initial perturbation are drawn from a given multivariate probability distribution. The solution to the \gls{pop} is instead more expensive, with both the primal and dual approaches taking slightly longer than the linearized solution. In addition to these recurring costs, the \gls{pop} based methods rely on the same high order Taylor expansion as the map inversion technique, which adds a considerable initial cost to the overall runtime. A possible mitigation is to treat the \gls{pop} as a generic \gls{nlp}, and solve this problem with a general purpose optimizer such as \gls{ipopt}. If on one side the runtime is cut by a factor of about four, on the other hand all guarantees of a robust convergence to the global optimum are lost. The same argument also applies to the \gls{da} map inversion technique, which can be seen as a local optimizer. Therefore, if the cost function is non-convex and characterized by multiple local optima, the chances of converging to the global minimizer are strongly influenced by either the initial guess provided to the \gls{nlp} solver, or by the expansion point chosen for the high order Taylor map. In contrast, the solution to the \gls{pop} is always guaranteed to be the global optimum (or optima) within the bounded set in which the optimization variables are defined.

% ###############################################################################################
% ###############################################################################################

\section{Low-Thrust Station-Keeping in the CR3BP}\label{s:LTTargetingProblem}
Having demonstrated the efficacy of the moment-\gls{sos} hierarchy for the impulsive targeting problem, this section extends the proposed framework to a low-thrust \gls{sk} problem in the \gls{cr3bp}. The incorporation of a varying time thrust segment and multiple targeting points increases the dimensionality of the decision space, requiring the optimizer to navigate the trajectory's sensitivity to the applied low-thrust profile.

Such a problem can be formulated as follows. The low-thrust version of the \gls{cr3bp} dynamics introduced in \cref{eq:cr3bp_ballistic_dynamics} can be written as:

\begin{equation}\label{eq:cr3bp_dynamics_LT}
    \bm{f}(\bm{x},\bm{\alpha},u) =
    \begin{bmatrix}
        \bm{v} \\[6pt]
        2\,\bm{\hat{z}}\!\times\!\bm{v}
        + \bm{r}
        - (1-\mu)\dfrac{\bm{r}+\mu\bm{\hat{x}}}{r_1^{3}}
        - \mu\dfrac{\bm{r}-(1-\mu)\bm{\hat{x}}}{r_2^{3}}
        + \dfrac{T_{\textrm{max}}}{m}\bm{\alpha} \\[10pt]
        -\dfrac{T_{\textrm{max}}}{I_\textrm{sp}g_0}\,u
    \end{bmatrix},
\end{equation}
\\
where $\bm{x}\in\mathbb{R}^{7\times 1}$ is the state vector comprising the spacecraft position $\bm{r} \in \mathbb{R}^{3\times 1}$, velocity $\bm{v} \in \mathbb{R}^{3\times 1}$, and mass $m \in \mathbb{R}_{+}$; $u \in [0,1]$ is the control magnitude; $\bm{\alpha} \in \mathbb{R}^{3\times 1}$ is the control vector in the reference frame, which encodes both the control direction and magnitude, thus $\|\bm{\alpha}\|=u$, a parametrization that decouples the magnitude from the control vector to assist in numerical stability; $T_{\textrm{max}} \in \mathbb{R}_+$ and $I_\textrm{sp} \in \mathbb{R}_+$ are the maximum thrust and specific impulse of the propulsion system, respectively; and $g_0 = 9.80665\ \textrm{m/s}^2$ is the standard gravitational acceleration of the Earth.

The nominal orbit is an Earth-Moon \gls{nrho} from Ref. \cite{Fu2020}. The spacecraft is assumed to start at periapsis, and perform a low-thrust maneuver over a time of $t_\textrm{thrust}$. The control vector is defined to be fixed in the \gls{rtn} reference frame of the spacecraft, and is therefore time-varying in the synodical \gls{cr3bp} frame. Due to the thrust being bounded, the control authority consequently limits the possible set of trajectory corrections. To ensure numerical feasibility of the problem, the reduction of the tracking error at the $N_\textrm{t}$ target points must be incorporated as a minimization objective within the cost function, as oppose to an inequality constraint as in the unbounded impulsive case of \cref{eq:mahalanobis_distance}. The objective function is thus stated as:

\begin{equation}\label{eq:objective_function_LT}
    \min_{\{\Delta \bm{\alpha}_i, \Delta u_i\}} \quad J = \sum_{i=1}^{N_\textrm{t}} \bm{p}_{t_i}^T\bm{R}_i\bm{p}_{t_i} + t_\textrm{thrust},
\end{equation}
where $\bm{p}_{t_i} \in \mathbb{R}^{3\times 1}$ are the position errors with respect to the nominal trajectory at the ${N_\textrm{t}}$ target times $t_{\textrm{target},i}$, and $\bm{R}_i$ is a $3\times 3$ diagonal matrix that weights the competing terms. In addition, the equality constraints:

\begin{subequations}\label{eq:equality_constraints_LT}
    \begin{align}
        \bm{x}(t) &= \bm{x}(t_0) + \int_{t_0}^{t} \bm{f}(\bm{x}(\tau),\bm{\alpha}(\tau),u(\tau),\tau)\dd{\tau}, \label{eq:state_propagation_LT} \\
        \bm{x}(t_0) &= \overline{\bm{x}}_0, \\
        u(t) &= \begin{cases} 
        1 & \text{for } t \in [t_0, t_{\text{thrust}}] \\ 
        0 & \text{otherwise} 
        \end{cases} \label{eq:boolean_control_constraint}
    \end{align}
\end{subequations}
and inequality constraints:

\begin{equation}\label{eq:control_vector_constraint}
        \|\overline{\bm{\alpha}}(t) + \Delta \bm{\alpha}(t)\|^2_2 \leq u(t)^2, \quad \forall \ t,
\end{equation}
apply, where $\overline{\bm{x}}_0 \in \mathbb{R}^{7\times1}$ is the initial state before applying control, and $\overline{\bm{\alpha}} \in \mathbb{R}^{3\times1}$ and $\Delta \bm{\alpha}  \in \mathbb{R}^{3\times1}$ are the control vector on the reference control profile and update to the control vector, respectively. \Cref{eq:control_vector_constraint} constrains the magnitude of the control vector to be no greater than the scalar control magnitude. At optimality, this inequality constraint is known to be binding. The problem is thus to choose the control $\Delta \bm{\alpha}$ to minimize the cost function \cref{eq:objective_function_LT} subject to the constraints of \cref{eq:equality_constraints_LT,eq:control_vector_constraint}.

\subsection{Polynomial Optimization Problem}
The \gls{nlp} problem formulated in \cref{eq:objective_function_LT,eq:equality_constraints_LT,eq:control_vector_constraint} is inherently difficult to solve, due to the trajectory's high sensitivity to control perturbations and the presence of non-convex constraints. The problem is therefore converted into a \gls{pop} to subsequently leverage the polynomial optimization techniques of the moment-\gls{sos} hierarchy. The control corrections are firstly initialized as:

\begin{equation}
        \left[ \bm{\alpha} \right] = \overline{\bm{\alpha}} + \delta \bm{\alpha},
    \label{eq:delta_LT_da}
\end{equation}
where $\delta \bm{\alpha}$ are the independent \gls{da} variables corresponding to the control vector. Then, evaluating \cref{eq:equality_constraints_LT} in the \gls{da} framework results in a high-order polynomial expression for the state propagation. The first state to be propagated is the low-thrust segment, starting from $\overline{\bm{x}}_0$. As this requires an expansion point, an initial guess of the thrust time is chosen to be $\overline{t}_\textrm{thrust}$. Thus, after propagating the state, the resulting expression is

\begin{equation}
    \da{\bm{x}(\overline{t}_\textrm{thrust})} = \ty{k}{\bm{x}(\overline{t}_\textrm{thrust})}{\delta \bm{\alpha}}.
    \label{eq:thrust_expansion}
\end{equation}

As the \gls{pop} is solving for a variable thrust segment, the high-order expansion of the flow with respect to the thrust time is required. The thrust segment is first initialized within the \gls{da} framework as

\begin{equation}
        \left[ t_\textrm{thrust} \right] = \overline{t}_\textrm{thrust} + \overline{t}_\textrm{thrust}\delta t_\textrm{thrust},
\end{equation}

Note that the \gls{da} variable $\delta t_\textrm{thrust}$ is multiplied by $\overline{t}_\textrm{thrust}$, which acts as a scaling factor. Constraining $\delta t_\textrm{thrust}\in[-1,1]$, when $\delta t_\textrm{thrust}=-1$, then $\left[ t_\textrm{thrust} \right]=0$, equating to zero thrust, and the opposite bound results in a thrust segment of $2t_{\textrm{thrust}}$. Although one may constrain $\delta t_\textrm{thrust}\in[-1,\infty)$ to exhaust all eventualities, this normalization $\delta t_\textrm{thrust}\in[-1,1]$ promotes the numerical stability of the problem, and is adopted for this work. A fixed-point iteration method known as the Picard iteration~\cite{PicardIteration}, which has previously been utilized effectively for \gls{da}-based time expansions~\cite{Evans2024_1}, is then applied. The iteration has the form

\begin{equation}\label{eq:PicardIteration}
    \begin{gathered}
        \bm{x}_{k+1} = \bm{x}_{\textrm{thrust}} + \int_{\overline{t}_{\textrm{thrust}}}^{t_{\textrm{thrust}}} \dot{\bm{x}}\left(\bm{x}_{k}(\tau)\right) \dd{\tau}
        \\
        \bm{x}_{k=0} = \bm{x}_{\textrm{thrust}}
    \end{gathered}
\end{equation}
where $\tau$ is a dummy variable for the time integration. The Picard–Lindelöf theorem states that, if the differential equations $\dot{\bm{x}}$ are Lipschitz continuous in $\bm{x}$, then there exists a unique solution to which the sequence of Picard iterations converges~\cite{PicardBanach}. This procedure is deterministic and guaranteed to converge. Thus, after $k$ iterations, the exact $k^{\text{th}}$-order Taylor expansion of the solution flow with respect to the final time is obtained, building upon the initial state previously determined from \cref{eq:thrust_expansion}. This temporal expansion allows variations in the time-of-flight to be considered directly. The resulting polynomial is given by

\begin{equation}
    \da{\bm{x}(t_\textrm{thrust})} = \ty{k}{\bm{x}(t_\textrm{thrust})}{\delta \bm{\alpha},\delta t_{\textrm{thrust}}}.
    \label{eq:thrust_expansion_dt}
\end{equation}

Subsequently, the trajectory is propagated from $t_\textrm{thrust}$ to the first target time $\overline{t}_1$. This yields a high-order expansion of the first target state that captures sensitivities to variations in both the control vector and thrust duration. The time at the first target is then initialized within the \gls{da} framework as 

\begin{equation}
        \left[ t_{\textrm{coast}} \right] = \overline{t}_{\textrm{coast}} + t_{\textrm{thrust}}\delta t_{\textrm{coast}},
        \label{eq:coast_DA_initialization}
\end{equation}
before the Picard iteration is again employed to perform a temporal expansion, this time accounting for variations in the coast duration between engine shutdown and target arrival. The resulting polynomial expansion is denoted as

\begin{equation}
    \da{\bm{x}(t_1)} = \ty{k}{\bm{x}(t_1)}{\delta \bm{\alpha},\delta t_{\textrm{thrust}},\delta t_1}.
    \label{eq:target_state_expansion_LT}
\end{equation}

Propagation until subsequent target times can then be performed as normal, without further time expansions. Note the scaling factor of $t_{\textrm{thrust}}$ multiplying the \gls{da} variable corresponding to the change in the coast segment in \cref{eq:coast_DA_initialization}. Importantly, the objective is to find a trajectory in which the extension of the thrust segment equals the reduction of the coast segment, ensuring the overall orbital timeline remains unchanged. Thus, this condition is satisfied with $\delta t_{\textrm{coast}}\in[-1,1]$, promoting numerical stability of the problem.

Alternatively, the condition of the change in the coast segment equaling the negative change in the thrust segment can be achieved via an inverse mapping procedure, akin to those employed in previous works~\cite{Evans2024_1,Evans2024_2}. However, because the polynomial in \cref{eq:target_state_expansion_LT} is of $n$-th order in both $\delta t_{\text{thrust}}$ and $\delta t_1$ (including all cross-terms), enforcing this condition at this stage generates higher-order terms up to order $2n$. Subsequent truncation to the polynomial's maximum order $n$ would then discard these newly created terms, resulting in a severe loss of information. To prevent this, the condition is enforced directly within the \gls{pop}, thereby preserving the full fidelity of the temporal expansions.

The tracking error at each target node $d_i(t_i) = \bm{p}_{t_i}^T\bm{R}_i\bm{p}_{t_i}$ is thus approximated via Taylor polynomials by substituting for the positional elements of \cref{eq:target_state_expansion_LT}. To promote numerical stability, this is normalized by the tracking error on the uncorrected trajectory profile. This conveniently corresponds to the zero-th order term of the expansion. The normalized Taylor approximation of the tracking error is denoted by

\begin{equation}
    [\tilde{d}_i] = \frac{\ty{k}{d_i}{\delta \bm{\alpha},\delta t_{\textrm{thrust}},\delta t_{\textrm{coast}}}}{\mathcal{T}^{(0)}_{d_i}}.
    \label{eq:tracking_error_expansion_LT}
\end{equation}

If one were to set the metric $\bm{R}_i=\mathrm{diag}(1,1,1)$ and consider zero control corrections, $[\tilde{d}_i]\big|_{\delta \bm{\alpha}=\bm{0}} = 1.0$. Consequently, any feasible solution will result in $[\tilde{d}_i] \in (0,1]$. This term would then be on a comparable scale to the competing fuel expenditure term in \cref{eq:objective_function_LT}, improving the numerical stability of the \gls{pop}. Substituting \cref{eq:tracking_error_expansion_LT} into \cref{eq:objective_function_LT} and additionally substituting the control and thrust time variables with their \gls{da} representations $\delta \bm{\alpha}$ and $[t_\textrm{thrust}]$ respectively, the cost function is approximated within the \gls{da} framework as:

\begin{equation}\label{eq:objective_function_DA_LT}
    \quad [J] = \ty{k}{J}{\delta \bm{\alpha},\delta t_{\textrm{thrust}},\delta t_{\textrm{coast}}} = \sum_{i=1}^{N_\textrm{t}} [\tilde{d}_i] + [t_{\textrm{thrust}}].
\end{equation}

The \gls{pop} for the low-thrust \gls{sk} scenario is then defined as:

\begin{equation}\label{eq:pop_problem_LT}
    \min_{\{\delta \bm{\alpha}, \delta t_{\textrm{thrust}}, \delta t_{\textrm{coast}} \}} \, \ty{k}{J}{\delta \bm{\alpha},\delta t_{\textrm{thrust}},\delta t_{\textrm{coast}}}
\end{equation}
subject to:
\begin{equation}\label{eq:pop_constraint_LT}
    \begin{aligned}
        (\overline{\bm{\alpha}} + \delta \bm{\alpha})^\top\cdot(\overline{\bm{\alpha}} + \delta \bm{\alpha}) &\leq 1 \\
        (\delta t_\textrm{thrust})^2 &\leq 1 \\
        (\delta t_\textrm{coast})^2 &\leq 1 \\
        \delta t_\textrm{thrust}-\delta t_{\textrm{coast}} &= 0
        \end{aligned}
\end{equation}

\subsection{Numerical Application}

\begin{figure}[!ht]
    \centering
    \hfill
    \subcaptionbox{Trajectory due to moment-\gls{sos} based \gls{sk}.\label{subfig:LT_corrected_orbits}}{\includegraphics[width=0.48\textwidth]{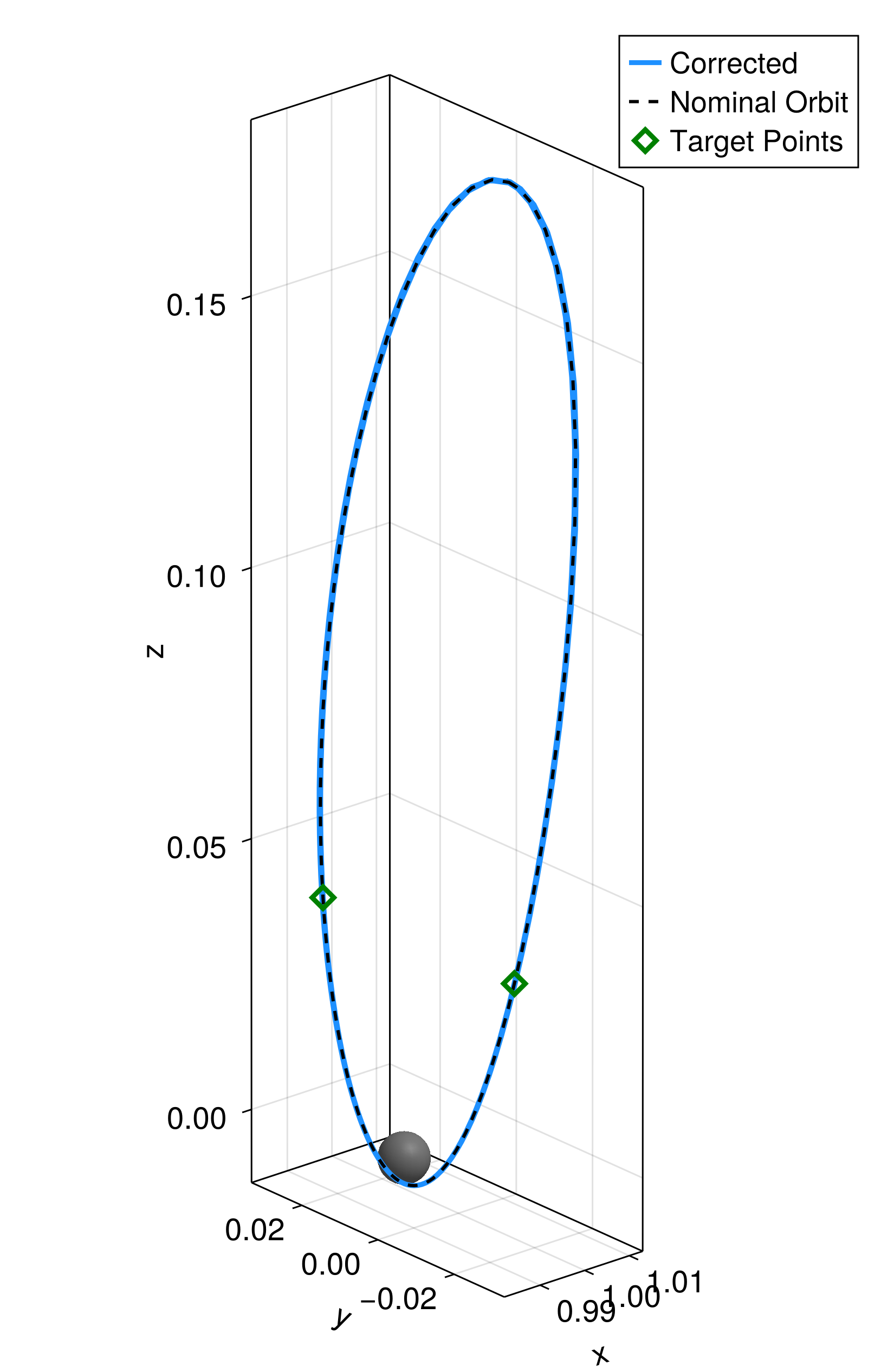}}
    \hfill
    \subcaptionbox{Uncorrected trajectory showing the rapid destabilization from the \gls{nrho}.\label{subfig:LT_uncorrected_orbits}}{\includegraphics[width=0.48\textwidth]{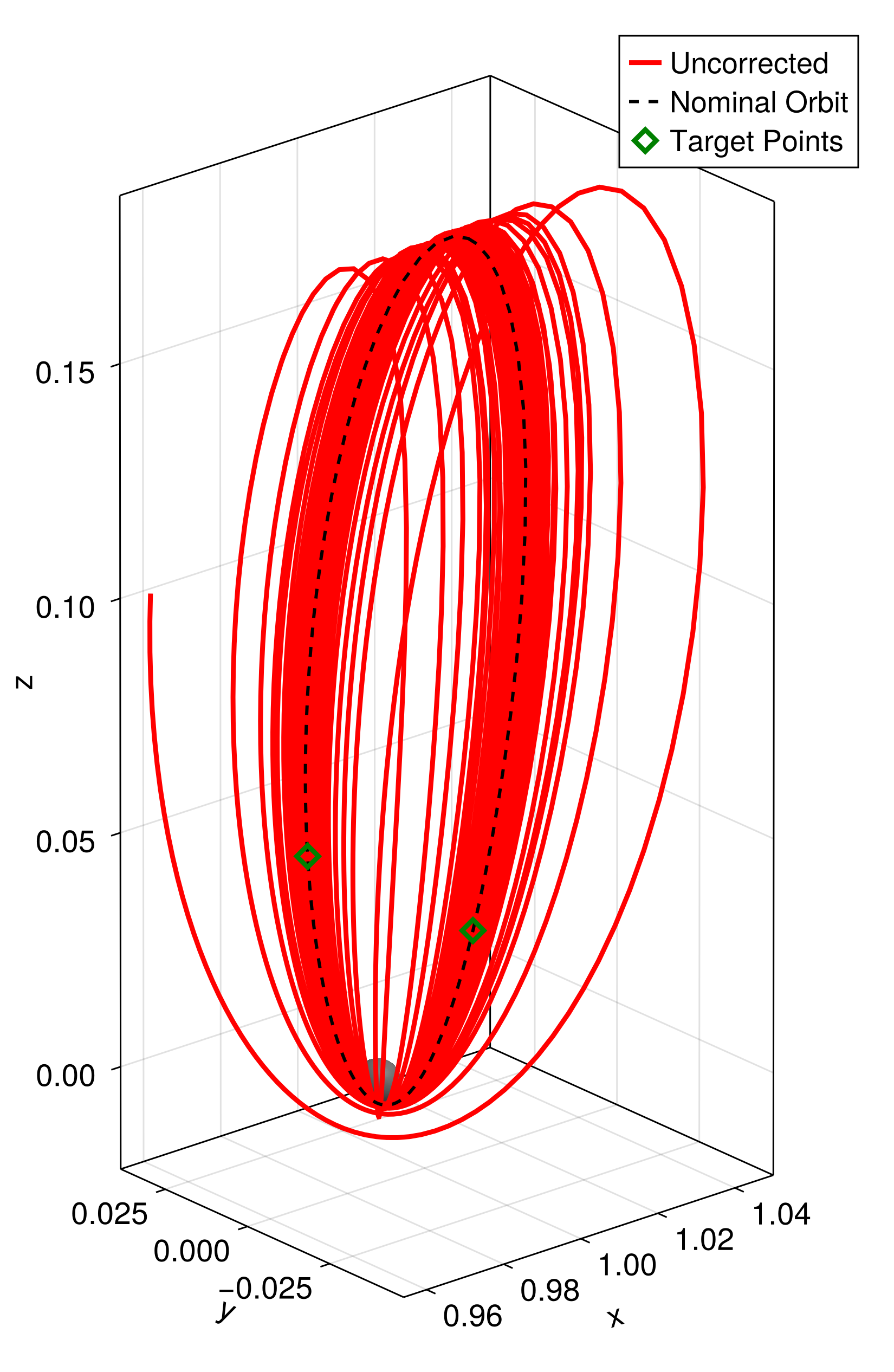}}%
    \hfill
    \caption{Visualizations of the low-thrust \gls{sk} scenario.}
    \label{fig:LT_scenario_plots}
\end{figure}

The \gls{tpa} to \gls{sk} in the \gls{cr3bp}, previously investigated in \cref{ss:impulsive_station_keeping}, is now revisited using the low-thrust formulation introduced in the preceding section. This section evaluates the robustness of the target-point \gls{sk} algorithm in stabilizing the nominal orbit against unmodeled dynamical perturbations. At every apolune passage, including at $t_0$, a perturbation is applied to the spacecraft state. Independent random unit vectors are generated for position and velocity, each sampled from a uniform spherical distribution. These directional vectors are subsequently scaled by randomized magnitudes drawn from zero-mean Gaussian distributions, using standard deviations of $\sigma=10$ km for position and $\sigma=0.3$ m/s for velocity. These errors are then added directly to the spacecraft state before the \gls{pop} is executed to solve for the subsequent orbital revolution. Ultimately, this framework enables an assessment of the algorithm's capacity to bound orbital deviations and maintain the spacecraft in close proximity to the nominal trajectory.

The spacecraft is assumed to have a starting mass of 1200 kg, a maximum thrust of $T_\textrm{max}=0.5$ N, and a specific impulse of $I_\textrm{sp}=3000$ s. Two target points are selected as those on the nominal orbit at 2.739 and 3.200 days. The control vector is defined within the spacecraft's \gls{rtn} frame. Consequently, although the thrust direction remains constant within this local frame during the thrust segment, its orientation continuously evolves relative to the Moon-centered reference frame.

Both the maximum order of the polynomial expansions and the maximum relaxation order for the moment-\gls{sos} hierarchy was set to 4. A simulation was ran for 62 revolutions of the nominal orbit, which coincided with just over one year total time at 368.2 days, thereby testing the ability of the optimization algorithm in long-term stabilization of the spacecraft under significant dynamical perturbations. The resulting trajectory for this entire period, shown in \cref{subfig:LT_corrected_orbits}, demonstrates the effectiveness of the \gls{sk} scheme in maintaining the spacecraft's vicinity to the nominal orbit. The fuel consumption for the total 368.2 day simulation was 0.56 kg. The minimum and mean altitude at periapsis was 30.65 km and 37.67 km, respectively, whilst the nominal altitude at periapsis was 36.25 km. For reference, the resulting trajectory without \gls{sk} is shown in \cref{subfig:LT_uncorrected_orbits}, demonstrating the sensitivity to state perturbations. However, it should be noted that the spacecraft descended below the Moon's surface on the $11^{\textrm{th}}$ revolution.

% ###############################################################################################
% ###############################################################################################

\section{Conclusions}\label{s:Conclusion}

This paper presented a methodology for solving spacecraft targeting and \gls{sk} problems utilizing polynomial and convex optimization techniques. By leveraging differential algebra to compute high-order Taylor expansions of the dynamics and constraints, the impulsive trajectory optimization problem was recast as a \gls{pop} and subsequently solved using moment-\gls{sos} optimization. For impulsive targeting scenarios, an efficient convex relaxation based on a second-order expansion was also derived. The moment-\gls{sos} formulation was shown to provide solutions as accurate as traditional \gls{nlp} solvers, but with the critical advantage of guaranteeing convergence to the global optimum under mild assumptions. Furthermore, the approach demonstrated robustness compared to standard linear approximations, particularly when handling large maneuvers and long propagation times. To demonstrate its versatility, the moment-\gls{sos} methodology was applied to a low-thrust \gls{sk} scenario in the Earth-Moon Circular Restricted Three-Body Problem. Significant state errors were simulated, validating the algorithm's capacity to bound orbital deviations against unmodeled dynamical perturbations. Ultimately, the ability to directly handle non-convex constraints and recast complex, nonlinear dynamics into formulations with reliable convergence properties without resorting the linear approximations makes the moment-\gls{sos} approach potentially suitable for autonomous onboard applications. Future work will focus on the extension of the proposed methods to multi-impulsive maneuver scenarios and to trajectory optimization problems with more complex objective functions and additional non-convex constraints, further highlighting the strengths of the \gls{pop} method in reliably finding the global optimum among several local minimizers.

\section*{Acknowledgment}

A portion of this material is based on research supported by the \gls{afosr} under award number: FA9550-23-1-0646.

\bibliographystyle{IEEEtran}
\bibliography{Journal/references}   % Use references.bib to resolve the labels.

\end{document}

%% file: Journal/acronyms.tex
% A
\newacronym{afosr}{AFOSR}{Air Force Office of Scientific Research}

% C
\newacronym{cr3bp}{CR3BP}{circular restricted three-body problem}

% D
\newacronym{da}{DA}{differential algebra}
\newacronym{dace}{DACE}{Differential Algebra Computational Engine}
\newacronym{dcp}{DCP}{disciplined convex programming}

% E
\newacronym[longplural={equations of motion}]{eom}{EOM}{equation of motion}

% G
\newacronym{gmp}{GMP}{Generalized Moment Problem}
\newacronym{gtpsa}{GTPSA}{Generalized Truncated Power Series Algebra}

% I
\newacronym{ic}{IC}{initial condition}
\newacronym{ipopt}{IPOPT}{Interior Point OPTimizer}

% L
\newacronym{lhs}{LHS}{left-hand side}
\newacronym{lmi}{LMI}{linear matrix inequality}
\newacronym{lp}{LP}{linear programming}

% M
\newacronym{mc}{MC}{Monte Carlo}
\newacronym{mop}{MOP}{moment optimization problem}

% N
\newacronym{nlp}{NLP}{nonlinear programming}
\newacronym{nrho}{NRHO}{near rectilinear halo orbit}

% P
\newacronym{pop}{POP}{polynomial optimization problem}
\newacronym{psd}{PSD}{positive semidefinite}

% R
\newacronym{rhs}{RHS}{right-hand side}
\newacronym{rk}{RK}{Runge-Kutta}
\newacronym{r2bp}{R2BP}{restricted two-body problem}
\newacronym{rtn}{RTN}{radial-transverse-normal}

% S
\newacronym{sdp}{SDP}{semidefinite programming}
\newacronym{sk}{SK}{station keeping}
\newacronym{sos}{SOS}{sum-of-squares}
\newacronym{stm}{STM}{state transition matrix}

% T
\newacronym{tpa}{TPA}{target point approach}

%% file: Journal/references.bib
@book{Ben-Tal2001,
  title = {Lectures on {{Modern Convex Optimization}}: {{Analysis}}, {{Algorithms}}, and {{Engineering Applications}}},
  shorttitle = {Lectures on {{Modern Convex Optimization}}},
  author = {{Ben-Tal}, Aharon and Nemirovski, Arkadi},
  year = {2001},
  series = {{{MPS-SIAM}} Series on Optimization},
  publisher = {{Society for Industrial and Applied Mathematics (SIAM)}},
  address = {Philadelphia, PA},
  doi = {10.1137/1.9780898718829},
  url = {http://epubs.siam.org/doi/book/10.1137/1.9780898718829},
  urldate = {2025-05-08},
  isbn = {978-0-89871-491-3 978-0-89871-882-9},
  langid = {english}
}

@book{Berz1999,
  title = {Modern {{Map Methods}} in {{Particle Beam Physics}}},
  author = {Berz, Martin},
  year = {1999},
  publisher = {Academic Press},
  address = {London, UK},
  url = {https://www.bmtdynamics.org/pub/papers/AIEP108book/AIEP108book.pdf},
  isbn = {0-12-014750-5}
}

@book{Boyd2004,
  title = {Convex {{Optimization}}},
  author = {Boyd, Stephen and Vandenberghe, Lieven},
  year = {2004},
  month = mar,
  edition = {1},
  publisher = {Cambridge University Press},
  address = {Cambridge, UK},
  doi = {10.1017/CBO9780511804441},
  url = {https://www.cambridge.org/core/product/identifier/9780511804441/type/book},
  urldate = {2024-07-03},
  copyright = {https://www.cambridge.org/core/terms},
  isbn = {978-0-521-83378-3 978-0-511-80444-1}
}

@article{Coey2022,
  title = {Solving {{Natural Conic Formulations}} with {{Hypatia}}.jl},
  author = {Coey, Chris and Kapelevich, Lea and Vielma, Juan Pablo},
  year = {2022},
  month = sep,
  journal = {INFORMS Journal on Computing},
  volume = {34},
  number = {5},
  pages = {2686--2699},
  publisher = {INFORMS},
  issn = {1091-9856},
  doi = {10.1287/ijoc.2022.1202},
  url = {https://pubsonline.informs.org/doi/abs/10.1287/ijoc.2022.1202},
  urldate = {2025-05-08}
}

@inproceedings{Deniau2015,
  title = {Generalised {{Truncated Power Series Algebra}} for {{Fast Particle Accelerator Transport Maps}}},
  booktitle = {Proc. 6th {{International Particle Accelerator Conference}} ({{IPAC}}'15), {{Richmond}}, {{VA}}, {{USA}}, {{May}} 3-8, 2015},
  author = {Deniau, Laurent and Tomoiag{\u a}, Ciprian},
  year = {2015},
  month = jun,
  series = {International {{Particle Accelerator Conference}}},
  pages = {374--377},
  publisher = {JACoW},
  address = {Geneva, Switzerland},
  doi = {10.18429/JACoW-IPAC2015-MOPJE039},
  url = {https://accelconf.web.cern.ch/IPAC2015/doi/JACoW-IPAC2015-MOPJE039.html},
  urldate = {2025-05-26},
  isbn = {978-3-95450-168-7},
  langid = {english}
}

@misc{Dixit2023,
  title = {Optimization.jl: {{A Unified Optimization Package}}},
  shorttitle = {Optimization.jl},
  author = {Dixit, Vaibhav Kumar and Rackauckas, Christopher},
  year = {2023},
  month = mar,
  doi = {10.5281/zenodo.7738525},
  url = {https://zenodo.org/records/7738525},
  urldate = {2025-04-21},
  howpublished = {Zenodo}
}

@inproceedings{Fu2020,
  title = {A {{High-order Target Point Approach}} to the {{Stationkeeping}} of {{Near Rectilinear Halo Orbits}}},
  booktitle = {71th {{International Astronautical Congress}}, {{CyberSpace Edition}}},
  author = {Fu, Xiaoyu and Baresi, Nicola and Armellin, Roberto},
  year = {2020},
  month = oct,
  pages = {1--12},
  address = {Virtual},
  url = {https://openresearch.surrey.ac.uk/esploro/outputs/conferenceProceeding/A-High-order-Target-Point-Approach-to/99521523802346},
  urldate = {2025-04-29},
  langid = {english}
}

@incollection{Henrion2005,
  title = {Detecting {{Global Optimality}} and {{Extracting Solutions}} in {{GloptiPoly}}},
  booktitle = {Positive {{Polynomials}} in {{Control}}},
  author = {Henrion, Didier and Lasserre, Jean-Bernard},
  editor = {Henrion, Didier and Garulli, Andrea},
  year = {2005},
  series = {Lecture {{Notes}} in {{Control}} and {{Information Science}}},
  volume = {312},
  pages = {293--310},
  publisher = {Springer},
  address = {Berlin, Heidelberg},
  doi = {10.1007/10997703\_15},
  url = {https://doi.org/10.1007/10997703\_15},
  urldate = {2025-04-21},
  isbn = {978-3-540-31594-0},
  langid = {english}
}

@misc{Henrion2008,
  title = {{{GloptiPoly}} 3: Moments, Optimization and Semidefinite Programming},
  shorttitle = {{{GloptiPoly}} 3},
  author = {Henrion, Didier and Lasserre, Jean-Bernard and L{\"o}fberg, Johan},
  year = {2008},
  month = sep,
  url = {https://homepages.laas.fr/henrion/software/gloptipoly3/}
}

@book{Henrion2023,
  author = {Henrion, Didier},
  title = {Moments for polynomial optimization - An illustrated tutorial},
  publisher = {Recent Trends in Computer Algebra, Institut Henri Poincar\'e, Paris},
  year = {2023},
  url = {https://homepages.laas.fr/henrion/papers/moments.pdf}
}

@article{Howell1993,
  title = {Station-Keeping Method for Libration Point Trajectories},
  author = {Howell, K. C. and Pernicka, H. J.},
  year = {1993},
  month = jan,
  journal = {Journal of Guidance, Control, and Dynamics},
  volume = {16},
  number = {1},
  pages = {151--159},
  publisher = {{American Institute of Aeronautics and Astronautics}},
  issn = {0731-5090},
  doi = {10.2514/3.11440},
  url = {https://doi.org/10.2514/3.11440},
  urldate = {2025-04-29}
}

@article{howell1995station,
  title={Station-keeping strategies for libration point orbits- Target point and Floquet Mode approaches},
  author={Howell, Kathleen C and Keeter, Timothy M},
  journal={Spaceflight mechanics 1995},
  pages={1377--1396},
  year={1995}
}

@article{gomez1998station,
  title={Station-keeping strategies for translunar libration point orbits},
  author={G{\'o}mez, Gerard and Howell, Kathleen C and Masdemont, Josep and Sim{\'o}, Carles},
  journal={Advances in Astronautical Sciences},
  volume={99},
  number={2},
  pages={949--967},
  year={1998}
}

@misc{HSL2023,
  title = {{HSL. A collection of Fortran codes for large scale scientific computation}},
  year = {2023},
  url = {http://www.hsl.rl.ac.uk/},
  urldate = {2025-04-18}
}

@article{Lasserre2001,
  title = {Global {{Optimization}} with {{Polynomials}} and the {{Problem}} of {{Moments}}},
  author = {Lasserre, Jean B.},
  year = {2001},
  month = jan,
  journal = {SIAM Journal on Optimization},
  volume = {11},
  number = {3},
  pages = {796--817},
  publisher = {{Society for Industrial and Applied Mathematics}},
  issn = {1052-6234},
  doi = {10.1137/S1052623400366802},
  url = {https://epubs.siam.org/doi/abs/10.1137/S1052623400366802},
  urldate = {2025-04-17}
}

@book{Lasserre2009,
  title = {Moments, {{Positive Polynomials And Their Applications}}},
  author = {Lasserre, Jean Bernard},
  year = {2009},
  series = {Imperial {{College Press Optimization Series}}},
  edition = {1},
  volume = {1},
  publisher = {Imperial College Press},
  address = {London, UK},
  isbn = {978-1-84816-445-1}
}

@inproceedings{Legat2017,
  title = {Sum-of-Squares Optimization in {{Julia}}},
  booktitle = {{{JuMP Developers Meetup}}/{{Workshop}}},
  author = {Legat, Beno{\^i}t and Coey, Christopher and Deits, Robin and Huchette, Joey and Perry, Amelia},
  year = {2017},
  month = jun,
  address = {Cambridge, MA},
  url = {https://hdl.handle.net/2078.1/195571},
  urldate = {2025-04-21},
  langid = {english}
}

@misc{Mosek2024,
  title = {{{MOSEK Modeling Cookbook}}},
  author = {{MOSEK ApS}},
  year = {2024},
  month = sep,
  pages = {1--124},
  institution = {MOSEK ApS},
  url = {http://docs.mosek.com/MOSEKModelingCookbook-a4paper.pdf}
}

@misc{Mosek2025,
  author = {{MOSEK ApS}},
  title = {{MOSEK version 11.0}},
  year = {2025},
  url = {https://www.mosek.com/},
  urldate = {2025-04-18}
}

@article{Rackauckas2017,
  title = {{{DifferentialEquations}}.jl -- {{A Performant}} and {{Feature-Rich Ecosystem}} for {{Solving Differential Equations}} in {{Julia}}},
  author = {Rackauckas, Christopher and Nie, Qing},
  year = {2017},
  month = may,
  journal = {Journal of Open Research Software},
  volume = {5},
  number = {1},
  pages = {1--10},
  issn = {2049-9647},
  doi = {10.5334/jors.151},
  url = {https://openresearchsoftware.metajnl.com/articles/10.5334/jors.151},
  urldate = {2025-04-21},
  langid = {american}
}

@inproceedings{Udell2014,
  title = {Convex {{Optimization}} in {{Julia}}},
  booktitle = {2014 {{First Workshop}} for {{High Performance Technical Computing}} in {{Dynamic Languages}}},
  author = {Udell, Madeleine and Mohan, Karanveer and Zeng, David and Hong, Jenny and Diamond, Steven and Boyd, Stephen},
  year = {2014},
  month = nov,
  pages = {18--28},
  publisher = {IEEE},
  address = {New Orleans, LA},
  doi = {10.1109/HPTCDL.2014.5},
  url = {https://ieeexplore.ieee.org/document/7069900},
  urldate = {2025-04-21},
  isbn = {978-1-4799-7020-9}
}

@article{Wachter2006,
  title = {On the Implementation of an Interior-Point Filter Line-Search Algorithm for Large-Scale Nonlinear Programming},
  author = {W{\"a}chter, Andreas and Biegler, Lorenz T.},
  year = {2006},
  month = mar,
  journal = {Mathematical Programming},
  volume = {106},
  number = {1},
  pages = {25--57},
  publisher = {Springer-Verlag},
  issn = {1436-4646},
  doi = {10.1007/s10107-004-0559-y},
  url = {https://link.springer.com/article/10.1007/s10107-004-0559-y},
  urldate = {2025-03-30},
  copyright = {2005 Springer-Verlag Berlin Heidelberg},
  langid = {english}
}

@inproceedings{Weisser2019,
  title = {Polynomial and {{Moment Optimization}} in {{Julia}} and {{JuMP}}},
  booktitle = {{{JuliaCon}}},
  author = {Weisser, Tillman and Legat, Beno{\^i}t and Coey, Chris and Kapelevich, Lea and Vielma, Juan Pablo},
  year = {2019},
  month = jul,
  address = {Baltimore, MD},
  url = {https://pretalx.com/juliacon2019/talk/QZBKAU/}
}

@article{Evans2024_1,
author = {Evans, Adam and Armellin, Roberto and Pirovano, Laura and Baresi, Nicola},
title = {High-Order Guidance for Time-Optimal Low-Thrust Trajectories with Accuracy Control},
journal = {Journal of Guidance, Control, and Dynamics},
volume = {47},
number = {2},
pages = {279-290},
year = {2024},
doi = {10.2514/1.G007540}
}

@article{Evans2024_2,
author = {Evans, Adam and Armellin, Roberto and Pirovano, Laura},
title = {Low-Thrust Fuel-Optimal Guidance with Automatic Control Sequence Detection and Separation},
journal = {Journal of Guidance, Control, and Dynamics},
volume = {47},
number = {12},
pages = {2512-2524},
year = {2024},
doi = {10.2514/1.G008093}
}

@inproceedings{Mao2016,
author = {Mao, Yuanqi and Szmuk, Michael and Açıkmeşe, Behçet},
year = {2016},
month = {12},
pages = {3636-3641},
title = {Successive convexification of non-convex optimal control problems and its convergence properties},
doi = {10.1109/CDC.2016.7798816}
}

@techreport{farquhar1970control,
  author      = {Farquhar, Robert W.},
  title       = {The Control and Use of Libration-Point Satellites},
  institution = {NASA Goddard Space Flight Center},
  year        = {1970},
  number      = {NASA TR R-346},
  month       = {September},
  url         = {https://ntrs.nasa.gov/citations/19700030541}
}

@incollection{PicardIteration,
author={Derks, Gianne},
editor={Meyers, Robert A.},
title={Existence and Uniqueness of Solutions of Initial Value Problems},
booktitle={Mathematics of Complexity and Dynamical Systems},
year={2011},
publisher={Springer New York},
pages={383-394},
doi={10.1007/978-1-4614-1806-1_25}
}

@incollection{PicardBanach,
author = {Berinde, Vasile},
title={The Picard Iteration},
booktitle={Iterative Approximation of Fixed Points},
year={2007},
publisher={Springer Berlin Heidelberg},
pages={31-62},
doi={10.1007/978-3-540-72234-2_2}
}
